\documentclass[12pt,a4paper,reqno]{amsart}
\usepackage{amssymb}
\usepackage{amscd}
\usepackage{enumerate}
\usepackage{graphicx}
\usepackage{siunitx}
\usepackage{tikz-cd}
\usepackage{color}
\usetikzlibrary{arrows}
\numberwithin{equation}{section}

\usepackage{mathabx}

\usepackage{mathtools}
\usepackage[tableposition=top]{caption}
\usepackage{booktabs,dcolumn}




\DeclareFontFamily{OT1}{rsfs}{}
\DeclareFontShape{OT1}{rsfs}{n}{it}{<-> rsfs10}{}
\DeclareMathAlphabet{\mathscr}{OT1}{rsfs}{n}{it}

\addtolength{\textwidth}{3 truecm}
\addtolength{\textheight}{1 truecm}
\setlength{\voffset}{-.6 truecm}
\setlength{\hoffset}{-1.3 truecm}
     
\theoremstyle{plain}

\newtheorem{theorem}{Theorem}[section]
\newtheorem{proposition}[theorem]{Proposition}
\newtheorem{lemma}[theorem]{Lemma}

\newtheorem{conjecture}[theorem]{Conjecture}

\theoremstyle{definition}

\newtheorem{definition}[theorem]{Definition}
\newtheorem{remark}[theorem]{Remark}

\newcommand\R{\mathbb{R}}
\newcommand\Z{\mathbb{Z}}

\newcommand\C{\mathbb{C}}

\newcommand\eps{\varepsilon}
\newcommand\M{\operatorname{M}}
\newcommand\Grow{{\mathcal F}}
\newcommand\md{{\ \operatorname{mod}\ }}

\newcommand\id{{\operatorname{id}}}
\newcommand\nil{{\operatorname{nil}}}
\newcommand\sml{{\operatorname{sml}}}
\newcommand\unf{{\operatorname{unf}}}

\parindent 0mm
\parskip   5mm 


\usepackage{algorithm, algorithmic}

\begin{document}

\title{Cancellation for the multilinear Hilbert transform}

\author{Terence Tao}
\address{UCLA Department of Mathematics, Los Angeles, CA 90095-1555.}
\email{tao@math.ucla.edu}


\subjclass[2010]{11B30, 42B20}

\begin{abstract}  For any natural number $k$, consider the $k$-linear Hilbert transform
$$ H_k( f_1,\dots,f_k )(x) := \operatorname{p.v.} \int_\R f_1(x+t) \dots f_k(x+kt)\ \frac{dt}{t}$$
for test functions $f_1,\dots,f_k: \R \to \C$.  It is conjectured that $H_k$ maps $L^{p_1}(\R) \times \dots \times L^{p_k}(\R) \to L^p(\R)$ whenever $1 < p_1,\dots,p_k,p < \infty$ and $\frac{1}{p} = \frac{1}{p_1} + \dots + \frac{1}{p_k}$. This is proven for $k=1,2$, but remains open for larger $k$.

In this paper, we consider the truncated operators
$$ H_{k,r,R}( f_1,\dots,f_k )(x) := \int_{r \leq |t| \leq R} f_1(x+t) \dots f_k(x+kt)\ \frac{dt}{t}$$
for $R > r > 0$. The above conjecture is equivalent to the uniform boundedness of $\| H_{k,r,R} \|_{L^{p_1}(\R) \times \dots \times L^{p_k}(\R) \to L^p(\R)}$ in $r,R$, whereas the Minkowski and H\"older inequalities give the trivial upper bound of $2 \log \frac{R}{r}$ for this quantity.  By using the arithmetic regularity and counting lemmas of Green and the author, we improve the trivial upper bound on $\| H_{k,r,R} \|_{L^{p_1}(\R) \times \dots \times L^{p_k}(\R) \to L^p(\R)}$ slightly to $o( \log \frac{R}{r} )$ in the limit $\frac{R}{r} \to \infty$ for any admissible choice of $k$ and $p_1,\dots,p_k,p$.  This establishes some cancellation in the $k$-linear Hilbert transform $H_k$, but not enough to establish its boundedness in $L^p$ spaces.
\end{abstract}

\maketitle


\section{Introduction}

For any natural number $k$ and test functions $f_1,\dots,f_k: \R \to \C$, define the $k$-linear Hilbert transform $H_k(f_1,\dots,f_k): \R \to \C$ by the formula
$$ H_k( f_1,\dots,f_k )(x) := \operatorname{p.v.} \int_\R f_1(x+t) \dots f_k(x+kt)\ \frac{dt}{t},$$
or more explicitly
\begin{equation}\label{hkdef}
 H_k( f_1,\dots,f_k )(x) = \lim_{r \to 0, R \to \infty} H_{k,r,R}(f_1,\dots,f_k)(x)
\end{equation}
where $H_{k,r,R}$ is the truncated $k$-linear Hilbert transform
\begin{equation}\label{hkr}
 H_{k,r,R}( f_1,\dots,f_k )(x) := \int_{r \leq |t| \leq R} f_1(x+t) \dots f_k(x+kt)\ \frac{dt}{t}.
\end{equation}
The operator $H_1$ is the classical Hilbert transform, which as is well known (see e.g. \cite{stein}) is bounded on $L^p(\R)$ for every $1 < p < \infty$.  The operator $H_2$ is the bilinear Hilbert transform; it was shown by Lacey and Thiele \cite{lacey-1, lacey-2} using time-frequency analysis techniques that $H_2$ maps $L^{p_1}(\R) \times L^{p_2}(\R)$ to $L^p(\R)$ whenever $1 < p,p_1,p_2 < \infty$ and $\frac{1}{p_1} + \frac{1}{p_2} = \frac{1}{p}$; in fact, they were able to relax the constraint $p>1$ to $p>2/3$, however in this paper it will be convenient to restrict\footnote{See also the negative results of \cite{demeter} showing that $H_3$ can be unbounded for certain values of $p$ below $1$.} to the ``Banach space case'' when all exponents are greater than $1$.  The same argument shows the corresponding bounds for $H_{2,r,R}$ that are uniform in $r,R$; that is to say, one has
$$ \| H_{2,r,R}( f_1, f_2 ) \|_{L^p(\R)} \leq C_{p,p_1,p_2} \|f_1\|_{L^{p_1}(\R)} \|f_2\|_{L^{p_2}(\R)}$$
whenever $1 < p,p_1,p_2 < \infty$, $\frac{1}{p_1} + \frac{1}{p_2} = \frac{1}{p}$, $f_1 \in L^{p_1}(\R)$, $f_2 \in L^{p_2}(\R)$, and $0 < r < R$, where $C_{p,p_1,p_2}$ is a quantity independent of $r,R$.  Note that the condition $\frac{1}{p_1} + \frac{1}{p_2} = \frac{1}{p}$ is necessary from dimensional analysis (or scaling) considerations.

From these facts, one may make the following conjecture.

\begin{conjecture}\label{tri}  Let $k \geq 1$ and $1 < p_1,\dots,p_k,p < \infty$ be such that $\frac{1}{p} = \frac{1}{p_1} + \dots + \frac{1}{p_k}$.  Then one has
\begin{equation}\label{hkp}
 \| H_{k,r,R}( f_1, \dots, f_k ) \|_{L^p(\R)} \leq C_{k,p,p_1,\dots,p_k} \|f_1\|_{L^{p_1}(\R)} \dots \|f_k\|_{L^{p_k}(\R)}
\end{equation}
whenever $1 < p,p_1,\dots,p_k < \infty$, $\frac{1}{p_1} + \dots + \frac{1}{p_k} = \frac{1}{p}$, $f_i \in L^{p_i}(\R)$ for $i=1,\dots,k$, and $0 < r < R$, where $C_{k,p,p_1,\dots,p_k}$ is a quantity independent of $r,R$.  In particular, from \eqref{hkdef} and Fatou's lemma we have
\begin{equation}\label{hkpr}
 \| H_{k}( f_1, \dots, f_k ) \|_{L^p(\R)} \leq C_{k,p,p_1,\dots,p_k} \|f_1\|_{L^{p_1}(\R)} \dots \|f_k\|_{L^{p_k}(\R)}
\end{equation}
for all test functions $f_1,\dots,f_k: \R \to \C$.
\end{conjecture}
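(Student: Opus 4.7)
The plan is to adapt the time-frequency/wave-packet framework of Lacey and Thiele, which proved the $k=2$ case, to general $k$. First I would perform a Littlewood--Paley decomposition in the $t$ variable: picking $\psi$ smooth with support in $1 \le |t| \le 2$ and $\sum_{j \in \Z} \psi(t/2^j) = 1$ for $t \ne 0$, write $H_{k,r,R} = \sum_{j:\, r \le 2^j \le R} T_j$ where $T_j$ is the single-scale operator with kernel $\psi(t/2^j)/t$. Rescaling shows the $T_j$ are all unitarily conjugate to one fixed operator, so Conjecture~\ref{tri} is really an almost-orthogonality statement: the partial sums $\sum_{j_1 \le j \le j_2} T_j$ must be bounded on $L^{p_1} \times \cdots \times L^{p_k} \to L^p$ uniformly in $j_1, j_2$.

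Next I would Fourier-expand each $T_j$ and Whitney-decompose the frequency hyperplane $\{(\xi_1,\dots,\xi_k) : \xi_1 + 2\xi_2 + \cdots + k\xi_k = 0\}$ into a generic part (where all partial sums $\sum_{i \in S} i\xi_i$ are comparable) and degenerate parts lying near lower-dimensional subspaces. The generic part should be treatable by standard multilinear Calder\'on--Zygmund theory, giving a geometrically decaying contribution in $|j_1 - j_2|$ that sums to an $O(1)$ bound. The degenerate pieces are the core difficulty: for each degeneracy pattern I would expand the relevant part of $T_j$ into a wave-packet sum over rank-$(k+1)$ tiles in phase space, reducing to a discrete model operator $\sum_{P \in \mathbf{P}} \epsilon_P \prod_{i=1}^{k} \langle f_i, \phi_{P,i}\rangle \, \phi_{P,k+1}$ over a lacunary tile collection $\mathbf{P}$.

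The central step is a tree-selection argument in the Lacey--Thiele style: greedily extract maximal trees of tiles concentrated near a fixed $(k-1)$-dimensional modulation direction, bound each tree by an $L^p$-BMO / John--Nirenberg input, and sum the tree contributions using size and energy inequalities. For $k=2$ this scheme closes because the modulation symmetry is one-parameter and the tile energies are controlled by $L^2$-Bessel inequalities, with interpolation handling the $L^{p_i}$ hypotheses. For $k \ge 3$, the analogous energy measures the correlation of $f_i$ with wave packets indexed by a $(k-1)$-parameter family of characters, which is exactly a higher-order Gowers-type quantity.

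The main obstacle --- and the reason the conjecture remains open --- is precisely this last step. A satisfactory proof appears to require a quantitative inverse theorem that controls a $U^{k-1}$-type tile energy by the $L^{p_i}$ norms of the inputs, with bounds strong enough to survive summation over the $\log(R/r)$ dyadic scales without any loss. The arithmetic regularity lemma used in the present paper does yield a qualitative version of such control, but only with ineffective constants, which is why it produces the $o(\log(R/r))$ cancellation bound rather than the uniform $O(1)$ estimate of Conjecture~\ref{tri}. Upgrading this regularity/counting input to a polynomially effective $U^{k-1}$-inverse theorem, and integrating it into the tile/tree framework, is the key missing ingredient in the plan.
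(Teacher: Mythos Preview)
The statement in question is Conjecture~\ref{tri}, which the paper does \emph{not} prove; it is explicitly left open for $k \geq 3$, and the paper's contribution is the strictly weaker Theorem~\ref{main} (an $o(\log(R/r))$ bound on the operator norm rather than $O(1)$). You clearly recognise this: your proposal is not a proof but an outline of a strategy, and you yourself identify the missing ingredient in the final paragraph. So there is no ``paper's own proof'' to compare against, and your honest conclusion that the argument does not close is correct.

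As commentary on the outline itself: the broad picture you sketch (dyadic decomposition in $t$, wave-packet discretisation, Lacey--Thiele tree selection, with the obstruction localised in the multi-parameter modulation invariance) is the standard heuristic, and your diagnosis of the obstacle --- that the relevant ``energy'' is a Gowers-type quantity for which no sufficiently effective inverse theorem exists --- matches the paper's own discussion and the general consensus. Two small technical corrections: first, the pertinent uniformity norm for $H_k$ is $U^k$ rather than $U^{k-1}$ (the dualised form involves $k+1$ functions $f_0,\dots,f_k$ along an arithmetic progression, and the generalised von Neumann inequality \eqref{jove} in the paper gives control by the $U^k$ norm); second, the Whitney decomposition of the frequency hyperplane into a ``generic'' Calder\'on--Zygmund part and ``degenerate'' parts does not, for $k \geq 3$, isolate a piece on which classical singular-integral theory alone yields a summable bound --- the $(k-1)$-parameter modulation symmetry acts transitively enough that essentially every region is degenerate in some direction. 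Your final paragraph is nonetheless accurate: the distance between the paper's $o(\log(R/r))$ and the conjectured $O(1)$ is precisely the distance between the qualitative arithmetic regularity lemma used here and a hypothetical polynomially-effective inverse theorem for the $U^k$ norms.
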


As mentioned above, this conjecture is established for $k=1,2$, but is completely open for larger values of $k$.  For instance, in the case $k=3$ of the trilinear Hilbert transform $H_3$, no $L^p$ bounds whatsoever are known.  Although it is not needed to motivate our main results, we also remark that the implication of \eqref{hkpr} from \eqref{hkp} can be reversed (with some loss in the multiplicative constant); if \eqref{hkpr} holds, then by restricting $f_1,\dots,f_k$ to intervals of length $R$, applying \eqref{hkpr} to these restrictions, and averaging over all such intervals (using Minkowski's inequality and H\"older's inequality to estimate some error terms) it is not difficult to show that
$$
 \| \lim_{r \to 0} H_{k,r,R}( f_1, \dots, f_k ) \|_{L^p(\R)} \leq C'_{k,p,p_1,\dots,p_k} \|f_1\|_{L^{p_1}(\R)} \dots \|f_k\|_{L^{p_k}(\R)}
$$
for some constant $C'_{k,p,p_1,\dots,p_k}$ and test functions $f_1,\dots,f_k$, and then on subtracting this bound for two different choices of $R,r$ and using a limiting argument we obtain \eqref{hkp} (with a slightly worse constant).  We leave the details to the interested reader.

One can approach Conjecture \ref{tri} by introducing the operator norm $C_{k,p,p_1,\dots,p_k}(R/r)$ of $H_{k,r,R}$, defined as the best constant for which one has
$$ \| H_{k,r,R}( f_1, \dots, f_k ) \|_{L^p(\R)} \leq C_{k,p,p_1,\dots,p_k}(R/r) \|f_1\|_{L^{p_1}(\R)} \dots \|f_k\|_{L^{p_k}(\R)}$$
for all $f_1 \in L^{p_1}(\R),\dots,f_k \in L^{p_k}(\R)$.  Note from scaling that the operator norm of $H_{k,\lambda r, \lambda R}$ is the same as that of $H_{k,r,R}$ for any $\lambda > 0$, which is why we write the operator norm $C_{k,p,p_1,\dots,p_k}(R/r)$ as a function of the ratio $R/r$ rather than of $R,r$ separately. Conjecture \ref{tri} is then equivalent to the assertion that $C_{k,p,p_1,\dots,p_k}(R/r)$ remains bounded in the limit $R/r \to \infty$.  On the other hand, from \eqref{hkr}, Minkowski's integral inequality and H\"older's inequality we have the \emph{trivial bound}
\begin{align*}
C_{k,p,p_1,\dots,p_k}(R/r) &\leq \int_{r \leq |t| \leq R}\ \frac{dt}{|t|} \\
&= 2 \log \frac{R}{r}.
\end{align*}
Our main result is the following slight improvement of the trivial bound.

\begin{theorem}[Improvement over trivial bound]\label{main}  Let $k \geq 1$ and $1 < p_1,\dots,p_k,p < \infty$ be such that $\frac{1}{p} = \frac{1}{p_1} + \dots + \frac{1}{p_k}$, and let $\eps > 0$.   Then, if $R/r$ is sufficiently large depending on $\eps,k,p_1,\dots,p_k,p$, one has
$$ C_{k,p,p_1,\dots,p_k}(R/r)  \leq \eps \log \frac{R}{r}.$$
\end{theorem}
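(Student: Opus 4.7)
The strategy is to split $\{r\le|t|\le R\}$ dyadically into the $J\sim\log_2(R/r)$ shells $\{2^jr\le|t|\le 2^{j+1}r\}$ and to exploit the mean-zero property of the truncated kernel $\tfrac1{t}\mathbf{1}_{2^jr\le|t|\le 2^{j+1}r}$ (from the $t\leftrightarrow -t$ symmetry) on each shell. Dualize against $f_0\in L^{p'}(\R)$ of unit norm to reduce to showing
$$ \sum_{j=0}^{J-1}|\Lambda_j|\;=\;o(J),\qquad \Lambda_j\;:=\;\int_{2^jr\le|t|\le 2^{j+1}r}\!\int_\R f_0(x)\prod_{i=1}^k f_i(x+it)\,dx\,\frac{dt}{t}.$$
The trivial Minkowski-H\"older bound is $|\Lambda_j|\le 2\log 2$, so it suffices to prove that at most an $\eps$-fraction of the $J$ shells admit $|\Lambda_j|\ge\eps/2$. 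Localizing the $f_i$ to a window of length $\sim 2^Jr$ and discretizing on a fine grid transfers the problem to a large cyclic group $\Z/N\Z$, to which the higher-order Fourier machinery of Green and Tao directly applies.

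\textbf{Step 2 (arithmetic regularity; control of the non-structured parts).} Apply the arithmetic regularity lemma to decompose each $f_i$ as
$f_i=f_i^{\nil}+f_i^{\sml}+f_i^{\unf}$
with $f_i^{\nil}$ an $s$-step nilsequence of complexity $O_\eps(1)$ (where $s=s(k)$ is chosen so that the Gowers $U^{s+1}$-norm controls $k$-linear averages of the above form), $\|f_i^{\sml}\|_{L^2}$ small, and $\|f_i^{\unf}\|_{U^{s+1}}$ small. A generalized von Neumann / counting lemma argument bounds, at each scale $j$, the contribution to $\Lambda_j$ of any term involving an $f_i^{\unf}$ factor by $o(1)$; H\"older's inequality combined with the $L^2$-smallness dominates the terms involving an $f_i^{\sml}$ factor. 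These contributions sum to $o(J)$ over all shells, as needed.

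\textbf{Step 3 (cancellation on the nilsequence main term).} It remains to handle the fully structured piece $\Lambda_j^{\nil}$, where every $f_i$ is replaced by $f_i^{\nil}$. By the Green-Tao counting lemma for nilsequences, the inner correlation
$ \int f_0^{\nil}(x)\prod_i f_i^{\nil}(x+it)\,dx$
can be written, up to acceptable error, as a Lipschitz function $\Phi(t)$ on a nilmanifold evaluated along a polynomial orbit of complexity $O_\eps(1)$ in $t$. Because the dyadic kernel has zero mean on each shell, the constant part of $\Phi$ contributes nothing; only the fluctuation of $\Phi$ across the shell survives. Leibman's quantitative equidistribution theorem for polynomial nilmanifold orbits, applied at scale $2^jr$, then shows that for all but $O_\eps(1)$ exceptional $j$ the orbit equidistributes finely enough on its closure to force $|\Lambda_j^{\nil}|\le\eps/2$. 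Absorbing the $O_\eps(1)$ exceptional scales (each bounded trivially) yields $\sum_j|\Lambda_j|\le\eps J+O_\eps(1)\le\eps\log(R/r)$ once $R/r$ is large, as claimed.

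\textbf{Main obstacle.} The principal difficulty is Step 3: one must transfer the continuous Calder\'on-Zygmund averaging into the discrete nil-averaging framework while quantitatively preserving the mean-zero dyadic cancellation, and show that the set of exceptional dyadic scales on which polynomial equidistribution fails is bounded only in terms of $\eps$ and $k$ (in particular independently of $J$, of the choice of $f_i$, and of the complexity parameters produced by the regularity lemma). A related subtle point is ensuring that the regularity decompositions at different dyadic scales are compatible enough to be summed across all $\log_2(R/r)$ shells without losing the $o(J)$ savings.
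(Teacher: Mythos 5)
There is a genuine gap, and it is exactly where you flag it. Your architecture applies the regularity lemma once, at the top scale $N \sim R$, and then tries to control every dyadic shell $|t| \sim 2^j r$ down to scale $r$. But all of the tools you invoke degrade badly when the shell scale is much smaller than the scale of the decomposition: the generalised von Neumann theorem bounds the localized bilinear-in-$(x,t)$ sum at scale $2^n$ by $\|f_\unf\|_{U^{d}([N])}$ only after a normalization loss of order $(N/2^n)^2$, and the Green--Tao counting lemma has error terms $o(N^2)$ which are useless compared to the size $\sim N\cdot 2^jr$ of the shell sum when $2^jr \ll N$ (a polynomial orbit of bounded complexity restricted to an interval of length $2^jr \ll N$ need not equidistribute at all, so no counting asymptotic at that scale follows from the decomposition at scale $N$). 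Since the regularity parameters ($M$, hence $\mathcal{F}(M)$) cannot depend on $R/r$, these losses cannot be beaten uniformly over $\log(R/r)$ scales. Your Step 3 claim — that Leibman-type equidistribution applied at scale $2^jr$ makes all but $O_\eps(1)$ of the $J$ scales good, uniformly in $J$, in the $f_i$, and in the complexity produced by the regularity lemma — is precisely the statement that would be needed, and it is not supplied by any cited result; it is the whole difficulty, not a routine step. A further problem occurs already in Step 1: after dualizing you are working with $L^{p_i}$-normalized functions, which are not bounded by $1$, whereas the arithmetic regularity lemma requires functions taking values in $[0,1]$; localizing to one window of length $\sim R$ does not fix this.

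The paper's proof is organized specifically to avoid asking for any gain across more than boundedly many scales at once. It first transfers to $\Z$, then reduces by interpolation to a restricted weak-type estimate with indicator functions $1_{E_i}$ (this is what makes the inputs bounded, so the regularity lemma applies), localizes in space with a partition of unity $\varphi$, and proves a ``tree estimate'' (Proposition \ref{key}): for a single dyadic interval $I_0$ there is some $A = O_\delta(1)$ such that one gains a factor $\delta$ on the scales $|I_0|/A \le |I| \le |I_0|$ only. In that bounded-ratio regime the counting lemma's $o(N^2)$ errors and a Riemann-sum argument are affordable, and the odd kernel $\psi$ kills the main term, exactly as in your heuristic. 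The passage from this local, small gain to the global $\eps\log\frac{R}{r}$ bound is then not an ``exceptional scales'' argument at all, but a spatial one: bad intervals force the maximal function $\M 1_{E_0}$ to be large, a greedy algorithm covers them by disjoint trees, and the Hardy--Littlewood maximal inequality controls the total measure where trees can live. If you want to salvage your outline, you would need either a counting/equidistribution input valid simultaneously at all scales below $N$ (which is not available) or to re-introduce the restricted weak-type reduction, the spatial localization, and a per-tree formulation of the gain as in the paper.
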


This falls well short of Conjecture \ref{tri}, but it does show that some cancellation is occurring in the $k$-linear Hilbert transform.  

A novel feature\footnote{See also \cite{christ} and \cite{kovac} for previous appearances of methods from arithmetic combinatorics in bounding multilinear operators related to $H_k$.} in our arguments is the introduction of tools from arithmetic combinatorics, particularly the theory of higher degree Gowers uniformity that was initially developed in \cite{gowers-4, gowers} to provide a new proof of Szemer\'edi's theorem \cite{szem} on arithmetic progressions.  Such tools are known to be useful for controlling expressions such as
$$ \sum_{x,t \in \Z} f_0(x) f_1(x+t) \dots f_k(x+kt) $$
 for various bounded, compactly supported functions $f_0,\dots,f_k: \Z \to \C$, so it is not so surprising in retrospect that they should also be able to say something non-trivial about integral expressions such as \eqref{hkr}.  Unfortunately, at the current state of development of the theory of higher degree uniformity, the quantitative bounds arising from these tools are quite poor for $k \geq 3$ (with no explicit bounds whatsoever in the current literature for $k \geq 5$), and so one would need a significant quantitative strengthening of the arithmetic combinatorics results, or the introduction of additional techniques, if one were to hope to make substantial progress towards Conjecture \ref{tri} beyond Theorem \ref{main}.

Roughly speaking, the strategy of proof of Theorem \ref{main} is as follows.  After some reductions reminiscent of those in \cite{lacey-1, lacey-2}, as well as a discretisation step in which one replaces the real line $\R$ with the integers $\Z$, one reduces matters to establishing a ``tree estimate'' in which one demonstrates non-trivial cancellation in expressions roughly of the form
\begin{equation}\label{tw}
 \sum_{x,t \in \Z} f_0(x) f_1(x+t) \dots f_k(x+kt) \psi( t / 2^n ) \varphi( 2^{-n} x - j ) 
\end{equation}
for some smooth compactly supported functions $\psi, \varphi$, some parameters $n,j$, and some bounded functions $f_0,\dots,f_k: \Z \to \R$.  Crucially, the function $\psi$ will be \emph{odd}, reflecting the odd nature of the Hilbert kernel $\frac{dt}{t}$ appearing in \eqref{hkdef}.  A standard ``generalised von Neumann theorem'' from arithmetic combinatorics tells us that expressions of the form \eqref{tw} are negligible if at least one of the functions $f_i$ is very small in a certain Gowers uniformity norm.  We then apply an \emph{arithmetic regularity lemma} from \cite{gt-reg} that asserts, roughly speaking, that any bounded function $f_i$ can be approximated (up to errors small in Gowers uniformity norm, plus an additional error small in an $L^2$ sense) with a special type of function, namely an \emph{irrational virtual nilsequence}.  This effectively allows one to replace all the functions $f_0,\dots,f_k$ in \eqref{tw} with such nilsequences.  The point of this reduction is that irrational nilsequences enjoy a \emph{counting lemma} (also from \cite{gt-reg}) that allows one to obtain good asymptotics for expressions such as \eqref{tw}.  At this point, the fact that $\psi$ is odd ensures that the main term in those asymptotics vanish, and the surviving error terms turn out to be small enough to eventually obtain the required conclusion in Theorem \ref{main}.

\subsection{Acknowledgments}

The author is supported by NSF grant DMS-1266164 and by a Simons Investigator Award.  He thanks Ciprian Demeter and Vjeko Kova{\v c} for helpful comments, and Pavel Zorin-Kranich for corrections.

\subsection{Notation}

We use the asymptotic notation $X \lesssim Y$, $Y \gtrsim X$, or $X = O(Y)$ to denote the assertion that $|X| \leq CY$ for some absolute constant $C$, which we call the \emph{implied constant}.  We will sometimes need to allow the implied constant to depend on additional parameters, in which case we indicate this by subscripts, e.g. $X \lesssim_\delta Y$, $Y \gtrsim_\delta Y$, or $X = O_\delta(Y)$ denotes the assertion that $|X| \leq C_\delta Y$ for some $C_\delta$ depending on $\delta$.  For brevity we will sometimes fix some basic parameters (e.g. $k$) and allow all implied constants to depend on such parameters (so that, for instance, $X = O_\delta(Y)$ is now short for $X = O_{\delta,k}(Y)$).  We also write $X \sim Y$ for $X \lesssim Y \lesssim X$.

We also use the asymptotic notation $X = o_{N \to \infty}(Y)$ to denote the assertion $|X| \leq c(N) Y$ where $c(N)$ is a quantity depending on a parameter $N$ that goes to zero as $N$ goes to infinity.  Again, if we need $c(N)$ to depend on external parameters, we will indicate this by subscripts; for instance, $X = o_{N \to \infty; k}(Y)$ denotes the assertion that $|X| \leq c_k(N) Y$ where $c_k(N)$ goes to zero as $N \to \infty$ for each fixed choice of $k$.

\section{Initial reductions}

We begin the proof of Theorem \ref{main}.   For technical reasons (having to do with the fact that the arithmetic regularity and counting lemmas in the literature are phrased in a discrete setting rather than a continuous one) we will need to transfer Theorem \ref{main} from the reals $\R$ to the integers $\Z$, giving up the scale invariance of the problem in the process.  Namely, we will derive Theorem \ref{main} from the following discrete version of that theorem.

\begin{theorem}[Discrete version of main theorem]\label{main-2}   Let $k \geq 1$ and $1 < p_1,\dots,p_k,p < \infty$ be such that $\frac{1}{p} = \frac{1}{p_1} + \dots + \frac{1}{p_k}$, and let $\eps > 0$.   Then, if $R \geq r \geq 1$ and $R/r$ is sufficiently large depending on $\eps,k,p_1,\dots,p_k,p$, one has
$$ 
\left\| \sum_{t \in \Z: r \leq |t| \leq R} \frac{f_1(x+t) \dots f_k(x+kt)}{t} \right\|_{\ell^p(\Z)} \leq \eps \log \frac{R}{r} \prod_{i=1}^k \|f_i\|_{\ell^{p_i}(\Z)} $$
for all $f_i \in \ell^{p_i}(\Z)$, $i=1,\dots,k$, where the $\ell^p$ norm on the left-hand side is in the $x$ variable.
\end{theorem}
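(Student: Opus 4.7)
The plan is to follow the blueprint outlined in the introduction: dualise, perform a double dyadic decomposition (in $t$ and in $x$), and then apply the arithmetic regularity and counting lemmas of Green and the author, using the oddness of the Hilbert kernel to annihilate the main term of the resulting asymptotic. By $\ell^p$-duality it suffices to bound the $(k+1)$-linear form
\[
\Lambda_{r,R}(f_0,\dots,f_k) := \sum_{x\in\Z}\sum_{r\le |t|\le R} \frac{f_0(x) f_1(x+t)\cdots f_k(x+kt)}{t}
\]
by $\eps \log(R/r)\,\|f_0\|_{\ell^{p'}}\prod_i \|f_i\|_{\ell^{p_i}}$. Writing $\frac{1_{r\le |t|\le R}}{t}=\sum_n \frac{1}{2^n}\psi(t/2^n)$ for a fixed odd bump $\psi$ supported on $[-2,-1]\cup[1,2]$, and introducing a spatial partition of unity $1=\sum_j \varphi(2^{-n}x-j)$ at the same scale, I would reduce matters to showing that, for each dyadic scale $n\in[\log r,\log R]$ with $n$ sufficiently large depending on $\eps$, the sum over $j$ of the local ``tree'' forms \eqref{tw} is at most $o(1)\,\|f_0\|_{\ell^{p'}}\prod_i \|f_i\|_{\ell^{p_i}}$ as $n\to\infty$; the remaining $O_\eps(1)$ small scales can be absorbed into the trivial bound once $R/r$ is large enough.

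On each spatial window of length $\sim 2^n$, after a pigeonholing step (plus level-set decomposition and homogeneity) reducing to the case where each $f_i$ is bounded by $1$ on the window, I would invoke the arithmetic regularity lemma at some Gowers degree $s\ge k$ with parameters depending on $\eps$, decomposing each $f_i=f_i^{\nil}+f_i^{\sml}+f_i^{\unf}$. The $f_i^{\unf}$ contribution to \eqref{tw} is negligible by the generalised von Neumann theorem; the $L^2$-small piece $f_i^{\sml}$ is absorbed via H\"older and the trivial bound on the remaining factors. The principal term, with every $f_i$ replaced by its nilsequence part, is handled by the counting lemma: up to errors that tend to $0$ as $n\to\infty$ once the nilmanifold complexity is fixed, it factorises as
\[
(\text{integral over the nilmanifold}) \cdot \int \varphi(y)\,dy \cdot \int \psi(s)\,ds,
\]
and the last factor vanishes because $\psi$ is odd. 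This is the per-scale saving that powers the theorem.

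The main obstacle will be controlling all the error terms uniformly. The quantitative bounds in the regularity and counting lemmas degrade rapidly in $s$ (and hence in $k$), and the nilmanifold complexity depends on $\eps$; once $\eps$ is fixed, one must verify that the counting-lemma error genuinely is $o(1)$ in $n$, uniformly across the scales in $[\log r,\log R]$ for which the main argument is run. One must also check that the H\"older summation over $j$ rebuilds the global $\ell^{p_i}$ norms of the $f_i$ without eating the per-scale saving. Finally, the displayed factorisation needs detailed justification: the shifts $x+it$ in the products $\prod_i f_i^{\nil}(x+it)$ a priori introduce $s$-dependence in the integrand, and one must exploit the irrationality property of the nilsequences produced by the regularity lemma to show that the integrand is constant in $s$ to leading order, so that the oddness of $\psi$ still annihilates the main contribution.
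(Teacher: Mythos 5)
There is a genuine gap at the heart of your plan: the proposed reduction to a \emph{per-scale} estimate is false. You claim that for each single dyadic scale $n$ (large enough in absolute terms) the sum over $j$ of the local forms is $o(1)\,\|f_0\|_{\ell^{p'}}\prod_i\|f_i\|_{\ell^{p_i}}$ as $n\to\infty$. But the single-scale operator has norm bounded below uniformly in $n$: the adversary can adapt the functions to the scale $2^n$ (already for $k=1$, taking $f_1=1_{\{1,\dots,2^n\}}$ and $f_0$ an indicator concentrated near the edges of that interval gives a single-scale form comparable to $\|f_0\|_{\ell^{p'}}\|f_1\|_{\ell^{p_1}}$, with no decay in $n$; in general one translates/dilates a fixed example to scale $2^n$). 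Absolute largeness of $n$ buys nothing because the problem at one scale is essentially scale-invariant. Relatedly, your regularization is performed on windows of length $\sim 2^n$, i.e.\ at the \emph{same} scale as the $t$-integration; but the counting lemma only makes the nilsequence contribution constant on sub-windows strictly below the regularization scale, so the oddness of $\psi$ cannot annihilate anything in that regime. This is exactly why the paper's tree estimate (Proposition \ref{key}) gives no gain at the top $\sim\delta\log A$ scales of a tree (those are absorbed by the trivial bound) and gains only at scales a factor $\geq A^{\delta}$ below the tree top, with the whole tree spanning only $\log A\ll_\delta 1$ scales: the saving is an \emph{average over a bounded range of scales below a chosen top}, with the range (and the value of $A$) produced by applying the regularity lemma once at the top scale, not a scale-by-scale $o(1)$ decay.

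Because the correct statement is of this averaged, function-dependent form, the proof needs the organizational machinery that your sketch omits and that cannot be waved through with ``H\"older summation over $j$'': the paper first reduces by duality, multilinear (Lorentz) interpolation and convexity to a restricted weak-type estimate with indicator functions $1_{E_i}$, and may assume $|E_i|\sim_\eps|E_0|$; it then encodes the local forms in the coefficients $a_I$ of \eqref{aie}, proves the maximal-function bound of Lemma \ref{law} to dispose of the intervals with $a_I\leq\delta$, and handles the level set $\{a_I>\delta\}$ by a greedy covering by trees, using $\M 1_{E_0}\gg\delta$ on bad intervals together with the Hardy--Littlewood maximal inequality and the bounded height $\log A_T\ll_{\eps,\delta}1$ of each tree to sum the tree estimates into \eqref{epsdel}. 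Your pigeonholing/level-set step gestures at the reduction to bounded functions, but without the $a_I$, the stopping-time selection of tree tops, and the maximal-function counting of overlaps, there is no mechanism in your argument for reassembling window-by-window savings (each with its own regularity data and its own admissible range of scales) into the global bound $\eps\log\frac{R}{r}\prod_i\|f_i\|_{\ell^{p_i}}$. I would suggest reworking the proposal around a statement of the form of Proposition \ref{key} --- a gain of $\delta$ on the average of $a_I$ over a tree of boundedly many scales below a top interval, with the regularity lemma applied at the top --- rather than a per-scale decay.
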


Let us assume Theorem \ref{main-2} for the moment and see how it implies Theorem \ref{main}.  We will use a standard transference argument.  Let $k,p_1,\dots,p_k,p,\eps,R,r$ be as in Theorem \ref{main}, with $R/r$ assumed large enough depending on $p_1,\dots,p_k,p,\eps$.  Let $\lambda > 0$ be a large quantity (depending on $k,p_1,\dots,p,\eps,R,r$) to be chosen later.  For $\lambda$ large enough, we have $\lambda R \geq \lambda r \geq 1$, and so by Theorem \ref{main-2} we have
$$ 
\left\| \sum_{t \in \Z: \lambda r \leq |t| \leq \lambda R} \frac{f_1(x+t) \dots f_k(x+kt)}{t} \right\|_{\ell^p(\Z)} \leq \eps \log \frac{R}{r} \prod_{i=1}^k \|f_i\|_{\ell^{p_i}(\Z)} 
$$
for all $f_i \in \ell^{p_i}(\Z)$.  In particular, given $f_i \in L^{p_i}(\R)$ for $i=1,\dots,k$, we have
$$ 
\left\| \sum_{t \in \Z: \lambda r \leq |t| \leq \lambda R} \frac{f_1(x+t+\theta) \dots f_k(x+kt+\theta)}{t} \right\|_{\ell^p(\Z)} \leq \eps \log \frac{R}{r} \prod_{i=1}^k (\sum_{x \in \Z} |f_i(x+\theta)|^{p_i})^{1/p_i}
$$
for any $0 \leq \theta \leq 1$.  Averaging (in $L^p$) over all such $\theta$ and using H\"older's inequality and Fubini's theorem, we conclude that
$$ 
\left\| \sum_{t \in \Z: \lambda r \leq |t| \leq \lambda R} \frac{f_1(x+t) \dots f_k(x+kt)}{t} \right\|_{L^p(\R)} \leq \eps \log \frac{R}{r} \prod_{i=1}^k \|f_i\|_{L^{p_i}(\R)}.
$$
Rescaling by $\lambda$, we conclude that
$$ 
\left\| \frac{1}{\lambda} \sum_{t \in \frac{1}{\lambda} \Z: r \leq |t| \leq R} \frac{f_1(x+t) \dots f_k(x+kt)}{t} \right\|_{L^p(\R)} \leq \eps \log \frac{R}{r} \prod_{i=1}^k \|f_i\|_{L^{p_i}(\R)}.
$$
Sending $\lambda \to \infty$ and using Riemann integrability and Fatou's lemma, we conclude that
$$ 
\left\| \int_{r \leq |t| \leq R} f_1(x+t) \dots f_k(x+kt)\ \frac{dt}{t} \right\|_{L^p(\R)} \leq \eps \log \frac{R}{r} \prod_{i=1}^k \|f_i\|_{L^{p_i}(\R)}
$$
if the $f_i$ are continuous and compactly supported.  Applying a limiting argument, we obtain Theorem \ref{main}.

\begin{remark} In the converse direction, one can derive Theorem \ref{main-2} from Theorem \ref{main} by applying the latter to functions of the form $x \mapsto f_i( \lfloor x \rfloor )$; we leave the details to the interested reader.
\end{remark}

It remains to establish Theorem \ref{main-2}.  We will perform a number of preliminary reductions analogous to those in \cite{lacey-1}, \cite{lacey-2}, namely a reduction to a dyadic restricted weak-type estimate, and the construction of various ``trees'' of dyadic intervals, with the key nontrivial input being a tree estimate (see Proposition \ref{key} below) that improves over the trivial bound coming from the triangle inequality.

We turn to the details. By duality, it will suffice to show that
$$
\left|\sum_{t \in \Z: r \leq |t| \leq R} \sum_{x \in \Z} \frac{f_0(x) f_1(x+t) \dots f_k(x+(k-1)t)}{t}\right| \leq \eps \log \frac{R}{r} \prod_{i=0}^k \|f_i\|_{\ell^{p_i}(\R)}$$
whenever $k \geq 1$, $\eps>0$, $1 < p_0,\dots,p_k < \infty$ are such that $\frac{1}{p_0} + \dots + \frac{1}{p_k} = 1$, $f_i \in L^{p_i}(\R)$ for $i=0,\dots,k$, $R \geq r \geq 1$, and $R/r$ is sufficiently large depending on $\eps$.  By multilinear interpolation (and modifying $\eps,p_0,\dots,p_k$ as necessary), we may replace the strong Lebesgue norms $\ell^{p_i}(\Z)$ here by the Lorentz norms $\ell^{p_i,1}(\Z)$, and then by convexity we may reduce to the case where each of the $f_i$ are indicator functions, thus it will suffice to show that
\begin{equation}\label{lad}
\left|\sum_{t \in \Z: r \leq |t| \leq R} \sum_{x \in \Z} \frac{1_{E_0}(x) 1_{E_1}(x+t) \dots 1_{E_k}(x+(k-1)t)}{t}\right| \leq \eps \log \frac{R}{r} \prod_{i=0}^k |E_i|^{1/p_i}
\end{equation}
whenever $\eps>0$, $1 < p_0,\dots,p_k < \infty$ are such that $\frac{1}{p_0} + \dots + \frac{1}{p_k} = 1$, $E_i$ are subsets of $\Z$ with finite cardinality $|E_i|$, and $R \geq r \geq 1$ with $R/r$ is sufficiently large depending on $\eps$.  Here of course $1_E$ denotes the indicator function of $E$.  

Henceforth $k,p_0,\dots,p_k$ will be fixed, and all implied constants will be allowed to depend on these parameters.  From H\"older's inequality we have
$$
\sum_{x \in \Z}1_{E_0}(x) 1_{E_1}(x+t) \dots 1_{E_k}(x+(k-1)t)\ dx \leq \min_{0 \leq i \leq k} |E_i|$$
for any $t$, and thus
$$
\left|\sum_{t \in \Z: r \leq |t| \leq R} \sum_{x \in \Z} \frac{1_{E_0}(x) 1_{E_1}(x+t) \dots 1_{E_k}(x+(k-1)t)}{t}\right| \leq \log \frac{R}{r} \min_{0 \leq i \leq k} |E_i|.$$
Comparing this with \eqref{lad}, we see that we are done unless
\begin{equation}\label{ei-bound}
|E_i| \sim_\eps |E_0|
\end{equation}
for all $0 \leq i \leq k$.  Henceforth we will assume that \eqref{ei-bound} holds.  It will now suffice (after adjusting $\eps$ if necessary) to show that
\begin{equation}\label{lad-2}
\left|\sum_{t \in \Z: r \leq |t| \leq R} \sum_{x \in \Z} \frac{1_{E_0}(x) 1_{E_1}(x+t) \dots 1_{E_k}(x+(k-1)t)}{t}\right| \lesssim \eps \log \frac{R}{r} |E_0| 
\end{equation}
if $R/r$ is sufficiently large depending on $\eps$.

The next step is a decomposition into dyadic intervals.  Let $\psi: \R \to \R$ be a fixed smooth odd function supported on $[-2,-1/2] \cup [1/2,2]$ with the property that
$$ \sum_{n \in \Z} 2^{-n} \psi(2^{-n} t) = \frac{1}{t}$$
for all $t \neq 0$; such a function can be constructed by taking $\psi(t) := \frac{\phi(t) - \phi(t/2)}{t}$ for some smooth even $\phi: \R \to \R$ supported on $[-1,1]$ and equaling $1$ on $[-1/2,1/2]$.  Henceforth we allow implied constants to depend on $\psi$.  Then the function $1_{r \leq |t| \leq R} \frac{1}{t}$ differs from $\sum_{n: r \leq 2^{n} \leq R} 2^{-n} \psi(t/2^n)$ only when $t \sim R$ or $t \sim r$, where both functions are $O(1/R)$ and $O(1/r)$ respectively.  From this and the triangle inequality one sees that \eqref{lad-2} is equivalent to
$$
\left|\sum_{n: r \leq 2^{n} \leq R} 2^{-n} \sum_{x,t \in \Z} 1_{E_0}(x) 1_{E_1}(x+t) \dots 1_{E_k}(x+kt) \psi(t/2^n)\right| \lesssim \eps \log \frac{R}{r} |E_0|
$$
since the left-hand side here differs from that of \eqref{lad-2} by $O(1)$, which is acceptable if $R/r$ is large enough depending on $\eps$.  By the triangle inequality, it thus suffices to show that
$$
\sum_{n: r \leq 2^{n} \leq R} 2^{-n} \left|\sum_{x,t \in \Z} 1_{E_0}(x) 1_{E_1}(x+t) \dots 1_{E_k}(x+kt) \psi(t/2^n) \ dx dt\right| \lesssim \eps \log \frac{R}{r} |E_0|. 
$$
We now introduce a further smooth function $\varphi: \R \to \R$ supported on $[-1,1]$ such that
$$ \sum_{j \in \Z} \varphi(x - j ) = 1$$
for all $x \in \R$; indeed one can take $\varphi(x) := \eta(x) - \eta(x+1)$ for some smooth $\eta: \R \to \R$ equal to $1$ for negative $x$ and $0$ for $x>1$.  We allow implied constants to depend on $\varphi$.  For each\footnote{As we are working on the integers, we do not consider dyadic intervals of length less than $1$.} (discrete) dyadic interval $I = \{ x \in \Z: j 2^n < x \leq (j+1) 2^n \}$ with $n \geq 0$, we define the quantity
\begin{equation}\label{aie}
 a_I := 2^{-2n} \left|\sum_{x,t \in \Z} 1_{E_0}(x) 1_{E_1}(x+t) \dots 1_{E_k}(x+(k-1)t) \psi(t/2^n) \varphi( 2^{-n} x - j )\ dx dt\right|
\end{equation}
so by the triangle inequality it suffices to show that
\begin{equation}\label{targ}
 \sum_{I: r \leq |I| \leq R} a_I |I| \lesssim \eps \log \frac{R}{r} |E_0|
\end{equation}
where the sum is over dyadic intervals $I$ of length between $r$ and $R$.

From the triangle inequality and \eqref{aie} we have the bound
\begin{equation}\label{aio}
a_I \lesssim 1
\end{equation}
for all $I$.  We also have the following estimate:

\begin{lemma}\label{law}  We have
$$ \sum_{I: r \leq |I| \leq R} a_I^{p/2} |I| \lesssim_{\eps,p} \log \frac{R}{r} |E_0|$$
for any $1 < p < \infty$.
\end{lemma}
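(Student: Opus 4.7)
My plan is to derive a pointwise bound $a_I \lesssim |E_0 \cap \tilde I|/|I|$ (where $\tilde I$ is a bounded dilation of $I$) from the definition \eqref{aie}, and then sum this at each dyadic scale using the Hardy--Littlewood maximal inequality. The pointwise bound follows by estimating $|\psi|, |\varphi| \lesssim 1$ and performing the sum in $t$ over the $\lesssim 2^n$ integers with $|t|\sim 2^n$ for each fixed $x$; by the change of variables $x' = x + (i-1)t$, one also obtains $a_I \lesssim |E_i \cap \tilde I_i|/|I|$ for each $i \in \{0,\dots,k\}$, and in particular $a_I \lesssim \M 1_{E_i}(x)$ pointwise on a bounded dilation of $I$, where $\M$ denotes the Hardy--Littlewood maximal function.

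For $p \geq 2$, the bound \eqref{aio} gives $a_I^{p/2} \lesssim_p a_I$, so at each scale $|I|=2^n$,
\begin{equation*}
\sum_{|I|=2^n} a_I^{p/2}\,|I| \;\lesssim_p\; \sum_{|I|=2^n} |E_0\cap\tilde I| \;\lesssim\; |E_0|,
\end{equation*}
using the bounded overlap of $\{\tilde I\}$ at each scale. Summing over the $O(\log(R/r))$ scales $n$ with $r \leq 2^n \leq R$ yields the lemma in this range.

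For $1 < p < 2$ the reduction $a_I^{p/2} \leq a_I$ fails (since $a_I \leq 1$ forces $a_I^{p/2} \geq a_I$), so I would instead use AM--GM to write $a_I \lesssim \bigl(\prod_{i=0}^k \M 1_{E_i}(x)\bigr)^{1/(k+1)}$ on $I$ and estimate
\begin{equation*}
\sum_{|I|=2^n} a_I^{p/2}\,|I| \;\lesssim\; \int_\R \prod_{i=0}^k \bigl(\M 1_{E_i}(x)\bigr)^{p/(2(k+1))}\,dx,
\end{equation*}
and then apply a H\"older inequality (in Lorentz spaces when needed), combined with $\|\M 1_{E_i}\|_{L^{q,\infty}} \lesssim |E_i|^{1/q} \sim_\eps |E_0|^{1/q}$ and the symmetric hypothesis \eqref{ei-bound} that $|E_i| \sim_\eps |E_0|$ for every $i$, to obtain the per-scale bound $\lesssim_{\eps,p} |E_0|$.

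The main obstacle is the case $1 < p < 2$: the straightforward strong-type H\"older-plus-maximal approach only treats $p \geq 2$, and controlling $\int \prod_i (\M 1_{E_i})^{\beta_i}\,dx$ when $\sum_i \beta_i = p/2 < 1$ requires a careful use of Lorentz spaces and the weak-type $(1,1)$ endpoint of $\M$, relying essentially on the symmetric nature of the hypothesis $|E_i| \sim_\eps |E_0|$ across all $i$.
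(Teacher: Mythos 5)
Your argument for $p \geq 2$ is fine (and slightly more elementary than the paper's, since bounded overlap at a single scale plus $a_I^{p/2} \lesssim_p a_I$ suffices), but the case $1 < p < 2$ is exactly the one that matters — the paper invokes the lemma with $p = 3/2$ — and there your route has a genuine gap: the per-scale inequality you aim for,
$$ \int_\R \prod_{i=0}^k \bigl(\M 1_{E_i}(x)\bigr)^{p/(2(k+1))}\,dx \lesssim_{\eps,p} |E_0|, $$
is simply false. Take all $E_i$ equal to a common interval of length $L$; then $\M 1_{E_i}(x) \sim L/(L+|x|)$, so the integrand decays like $|x|^{-p/2}$ with $p/2 < 1$ and the integral diverges. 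The Lorentz-space H\"older argument cannot rescue this: each factor $(\M 1_{E_i})^{p/(2(k+1))}$ lies in $L^{2(k+1)/p,\infty}$, so H\"older only places the product in $L^{2/p,\infty}$ with $2/p > 1$, and weak $L^q$ with $q>1$ does not embed into $L^1$. The root of the problem is that your pointwise bound extracts only \emph{one} indicator at a time, so after the geometric mean and the power $p/2$ the total exponent on the maximal functions is $p/2 < 1$, which is too small to be summable.

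The fix (and this is what the paper does) is to retain \emph{two} indicator factors, exploiting that $x$ and $x+t$ are effectively independent variables in the double sum \eqref{aie}: bounding all other factors by $1$ and summing in $x$ and $t$ separately over ranges of length $O(2^n)$ near $I$ gives
$$ a_I \lesssim \inf_{x \in I} \M 1_{E_0}(x)\, \M 1_{E_1}(x), $$
a product of two maximal functions with no root taken. Then $a_I^{p/2}|I| \lesssim \sum_{x \in I} (\M 1_{E_0})^{p/2} (\M 1_{E_1})^{p/2}$, each $x$ lies in $O(\log\frac{R}{r})$ dyadic intervals with $r \leq |I| \leq R$, and Cauchy--Schwarz puts each factor in $\ell^p$ with $p>1$, where the strong $(p,p)$ Hardy--Littlewood inequality applies; combined with \eqref{ei-bound} this gives the stated bound for all $1 < p < \infty$ at once, with total exponent $p>1$ on the maximal functions so that the tails are summable. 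Your single-factor bound and \eqref{aio} are correct as far as they go, but without this two-factor (or some total-exponent-greater-than-one) device the $1<p<2$ case does not close.
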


\begin{proof}  We bound
\begin{align*}
a_I &\lesssim 2^{-2n} \int_{|t| \sim 2^n} \int_{x = (j+O(1))2^n} 1_{E_0}(x) 1_{E_1}(x+t)\ dx dt \\
&\lesssim 2^{-2n} \left(\int_{x = (j+O(1))2^n} 1_{E_0}(x)\ dx\right) \left(\int_{y = (j+O(1))2^n} 1_{E_1}(y)\ dy\right) \\
&\lesssim \inf_{x \in I} \M 1_{E_0}(x) \M 1_{E_1}(x) 
\end{align*}
where $\M f(x) := \sup_{r \geq 1} \frac{1}{2r+1} \sum_{y \in \Z: |y-x| \leq r} |f(y)|$ is the Hardy-Littlewood maximal operator on the integers $\Z$.  Thus
$$ a_I^{p/2} |I| \lesssim \sum_{x \in I} \M 1_{E_0}(x)^{p/2} \M 1_{E_1}(x)^{p/2} $$
and so (since each $x \in \Z$ is contained in $O( \log \frac{R}{r} )$ dyadic intervals $I$ with $r \leq |I| \leq R$)
$$ \sum_{I: r \leq |I| \leq R} a_I^{p/2} |I| \lesssim \left(\log \frac{R}{r}\right) \sum_{x \in \Z} \M 1_{E_0}(x)^{p/2} \M 1_{E_1}(x)^{p/2} $$
and the claim follows from the Cauchy-Schwarz inequality, the Hardy-Littlewood maximal inequality, and \eqref{ei-bound}.
\end{proof}

From the above lemma with $p=3/2$ (say) we see in particular that
$$ \sum_{I: r \leq |I| \leq R; a_I \leq \delta} a_I |I| \lesssim_\eps \delta^{1/4} \log \frac{R}{r} |E_0|$$
so to prove \eqref{targ} it suffices by \eqref{aio} to show that
\begin{equation}\label{epsdel}
\sum_{I: r \leq |I| \leq R; a_I > \delta} |I| \lesssim \eps \log\frac{R}{r} |E_0|
\end{equation}
for any $\delta > 0$, whenever $R/r$ is sufficiently large depending on $\eps,\delta$.

In the next section, we will establish the following result.

\begin{proposition}[Cancellation in a tree]\label{key}  Let $I_0$ be a dyadic interval, and let $\delta>0$.  Then there exists a quantity $1 \leq A \ll_{\delta} 1$ (which can depend on $I_0$) such that
$$ \sum_{I: I \subset I_0; |I_0|/A \leq |I| \leq |I_0|} a_I |I| \lesssim \delta |I_0| \log A.$$
\end{proposition}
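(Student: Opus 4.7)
The plan is to apply the arithmetic regularity lemma of Green and the author from \cite{gt-reg} to decompose each indicator $1_{E_i}$, restricted to a neighbourhood of $I_0$, as
$$ 1_{E_i} = f_{i,\nil} + f_{i,\sml} + f_{i,\unf}, $$
where $f_{i,\nil}$ is a bounded irrational virtual nilsequence of bounded complexity and degree at most $k$, $\|f_{i,\unf}\|_{U^{k+1}} \leq \eta$, $\|f_{i,\sml}\|_{L^2}\leq\eta$ in a suitable normalisation, and all three pieces are $O(1)$ in $L^\infty$. Here $\eta=\eta(\delta)$ is small, and $A=A(\delta,\eta)$ is dictated by the complexity and irrationality bounds returned by the lemma. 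Substituting into \eqref{aie} expands $a_I$ into $3^{k+1}$ contributions which I would split into three classes: (i) at least one $\unf$-factor, (ii) no $\unf$-factor but at least one $\sml$-factor, and (iii) purely $\nil$-factors.

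For class (i), I would invoke a generalised von Neumann theorem in the spirit of \cite{gowers}, which shows that any weighted multilinear average
$$ 2^{-2n}\sum_{x,t\in\Z} g_0(x) g_1(x+t)\cdots g_k(x+kt)\, \psi(t/2^n)\, \varphi(2^{-n}x-j) $$
is controlled by $\min_i \|g_i\|_{U^{k+1}} \prod_{j\neq i}\|g_j\|_\infty$, up to factors depending only on $\psi,\varphi$. Every class-(i) contribution to $a_I$ is therefore pointwise $O(\eta)$, and summing $a_I|I|$ over the $O(\log A)$ scales and $O(A)$ intervals per scale inside $I_0$ gives a total of $\lesssim \eta|I_0|\log A$, acceptable once $\eta$ is small enough in terms of $\delta$.

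For class (ii), Cauchy--Schwarz in $x$, bounding the remaining $\nil$-factors trivially by $O(1)$, reduces matters to a maximal-type average controlled by the same Hardy--Littlewood ingredients that underlie Lemma \ref{law}; the $L^2$ smallness of $f_{i,\sml}$ then provides the required saving, and the sum over scales and intervals is again $\lesssim \delta|I_0|\log A$ for $\eta$ sufficiently small in terms of $\delta$.

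The main obstacle is class (iii), the purely nilsequence contribution. Here I would apply the counting lemma from \cite{gt-reg}: writing $f_{i,\nil}(n)=F_i(g^n\Gamma)$ on a nilmanifold $G/\Gamma$, the counting lemma supplies an asymptotic for
$$ 2^{-2n}\sum_{x,t\in\Z}\prod_{i=0}^{k}F_i(g^{x+(i-1)t}\Gamma)\,\psi(t/2^n)\,\varphi(2^{-n}x-j) $$
as an integral over the sub-nilmanifold describing the joint polynomial orbit $(x,t)\mapsto(g^{x+it}\Gamma)_{i=0}^k$, against the scalar weight $\psi(s)\varphi(y-j)$ obtained after rescaling $s=t/2^n$, $y=x/2^n$. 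The key observation is that by Leibman-type equidistribution the limiting integral factorises in the $s$-variable, so the $s$-integral separates off and vanishes as $\int\psi(s)\,ds=0$ by the oddness of $\psi$. What remains is the counting lemma's quantitative error, which tends to zero as $|I|\to\infty$; choosing $A$ large enough (depending on $\delta$ and on the complexity/irrationality parameters produced in the first step) forces this error to be $\leq\delta$ uniformly on all scales under consideration, yielding a contribution $\lesssim \delta|I_0|\log A$. The hard part is verifying this factorisation precisely: one must check that the joint orbit has no hidden coupling between $x$ and $t$ that would obstruct the $\int\psi=0$ cancellation, and that the quantitative rate of convergence in the counting lemma is strong enough to be absorbed uniformly across the $\log A$ scales.
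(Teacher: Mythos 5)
Your strategy is the same as the paper's: reduce to $I_0=[N]$, decompose the indicators by the arithmetic regularity lemma of \cite{gt-reg}, dispose of the uniform parts by a weighted generalised von Neumann theorem and of the small parts by an $\ell^2$/maximal-function argument, and evaluate the purely nilsequence term by the counting lemma, the oddness of $\psi$ killing the main term. The ``hidden coupling'' you worry about in class (iii) is exactly what the counting lemma excludes: its main term is a constant $\alpha$ independent of the position of the box, so the weighted main term is $\alpha\int\psi(s)\varphi(y-j)\,ds\,dy=0$.

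There is, however, a genuine gap in your parameter bookkeeping, most visibly in class (i). The regularity lemma gives $\|f_{i,\unf}\|_{U^{k+1}([N])}\leq\eta$ with the Gowers norm normalised at scale $N=|I_0|$, whereas $a_I$ in \eqref{aie} is normalised at scale $|I|=2^n$, which may be as small as $N/A$. The weighted von Neumann bound (Lemma \ref{unf-con}) therefore yields $a_I\lesssim 2^{-2n}N^2\|f_{i,\unf}\|_{U^{d}([N])}\leq A^2\eta$, not $O(\eta)$; and this loss is not an artifact, since global uniformity does not imply local uniformity at scale $N/A$ (a quadratic phase $e^{2\pi i\alpha x^2}$ with $\alpha$ highly irrational is globally $U^2$-small yet essentially a linear phase on short intervals). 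Because in your scheme $\eta=\eta(\delta)$ is fixed \emph{before} the lemma is applied, while $A$ is dictated by the complexity $M$ that the lemma returns (hence depends on $\eta$), you cannot afterwards shrink $\eta$ to beat the $A^2$ loss: the choice is circular. The same issue affects class (iii): the counting-lemma error is $o_{\Grow(M)\to\infty;M}(N^2)+o_{N\to\infty;M}(N^2)$, and it is made negligible by demanding irrationality large compared with all $M$- and $A$-dependent quantities and by taking $N$ large (small $N$ being dispatched by the trivial bound once $A$ is large) --- enlarging $A$ does not shrink this error, contrary to what you assert. The paper's remedy is precisely the growth-function formulation of Theorem \ref{strong-reg}: the uniform part has $U^{d}([N])$ norm at most $1/\Grow(M)$ and the nil part is $(\Grow(M),N)$-irrational, where $\Grow$ is chosen in advance to grow so rapidly that $1/\Grow(M)$ and the irrationality dominate every loss depending on $M$ and on $A=A(\delta,M)$. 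With that change of quantifiers (and the trivial-bound treatment of small $N$ and of the scales $2^n$ near $N$), your outline becomes the paper's proof.
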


The point here, of course, is the gain of $\delta$ on the right-hand side, since otherwise the claim is immediate from the trivial bound \eqref{aio}.

Let us assume this proposition for the moment and see how to conclude \eqref{epsdel}.  Call a dyadic interval $I$ \emph{bad} if $r \leq |I| \leq R$ and $a_I > \delta$.  From the proof of Lemma \ref{law} (bounding $M 1_{E_1}$ crudely by $1$) we see that
$$ \M 1_{E_0}(x) \gg \delta $$
whenever $x$ lies in a bad interval.  In particular, from the Hardy-Littlewood maximal inequality we see that there are only finitely many bad intervals.

Let ${\mathcal I}$ denote the collection of bad dyadic intervals.  Using a greedy algorithm (starting with the largest bad intervals and only moving on to the smaller bad intervals once all the largest onces have been covered), as well as Proposition \ref{key} (with $\delta$ replaced by $\eps \delta$), we may cover ${\mathcal I}$ by a family ${\mathcal T}$ of disjoint ``trees'' $T$, each of the form $T = \{ I: I \subset I_T; |I_T|/A_T \leq |I| \leq |I_T| \}$ for some ``tree top'' $I_T \in {\mathcal I}$ and some quantity $1 \leq A_T \ll_{\eps,\delta} 1$, with the property that
$$ \sum_{I \in T} a_I |I| \lesssim \eps \delta |I_T| \log A_T.$$
Note that we only require that the top $I_T$ of the tree $T$ lie in ${\mathcal I}$; the other elements of $T$ may lie outside ${\mathcal I}$.

Summing over all trees $T \in {\mathcal T}$, we conclude that
$$ \sum_{I: r \leq |I| \leq R; a_I > \delta} |I| \lesssim \sum_{T \in {\mathcal T}} \eps \delta |I_T| \log A_T.$$
On the other hand, we have
$$ |I_T| \log A_T \ll \int_\R \sum_{I \in T} 1_I(x)\ dx $$
for each tree $T \in {\mathcal T}$, and thus (by the disjointness of the trees $T$)
$$ \sum_{I: r \leq |I| \leq R; a_I > \delta} |I| \lesssim \eps \delta \sum_{x \in \Z} \sum_{I \in \bigcup_{T \in {\mathcal T}} T} 1_I(x)\ dx.$$
For each $x$ in the support of $\sum_{I \in \bigcup_{T \in {\mathcal T}} T} 1_I(x)$, we have $x \in I_T$ for some tree top $I_T$, and thus $\M 1_{E_0}(x) \gg \delta$.  By the Hardy-Littlewood maximal inequality, we thus see that $x$ is contained in a set of measure $O(|E_0|/\delta)$.  Finally, by construction, every interval $I$ in a tree $T \in {\mathcal T}$ has size at most $R$ and at least $r / A_T \gg_{\eps,\delta} r$, and so each $x$ is contained in at most $O( \log \frac{R}{r} )$ intervals if $R/r$ is sufficiently large depending on $\eps,\delta$.  Putting all this together we obtain \eqref{epsdel} as required.

It remains to establish Proposition \ref{key}.  This will be accomplished in the next section.  

\section{Applying the arithmetic regularity and counting lemmas}

By translation we may assume that
$$ I_0 = \{1,\dots,N\} =: [N]$$ 
for some natural number $N$ which is a power of $2$.  Our task is to find $1 \leq A \ll_\delta 1$ such that
\begin{equation}\label{nan}
\begin{split}
&\sum_{n: N/A \leq 2^n \leq N} \sum_{j=0}^{N/2^n - 1}
2^{-n} \left|\sum_{x,t \in \Z} 1_{E_0}(x) 1_{E_1}(x+t) \dots 1_{E_k}(x+(k-1)t) \psi(t/2^n) \varphi( 2^{-n} x - j )\right| \\
&\quad \lesssim \delta N \log A.
\end{split}
\end{equation}
We will in fact produce an $A$ with $A \geq C_\delta$, where $C_\delta$ is a sufficiently large quantity depending on $\delta$.  We may assume that $N$ is sufficiently large depending on $\delta$, since otherwise we can use the trivial bound of $O(N \log N)$ on the left-hand side (coming from the fact that there are only $O(\log N)$ choices for $n$) to conclude, after choosing $A$ large enough.

Let $E'_i := E_i \cap [N]$.  For each $n$ with $N/A \leq 2^n \leq N$, we may replace the $E_i$ by $E'_i$ in \eqref{nan} for all but $O(1)$ choices of $j$ (coming from those $j$ near $0$ or $N/2^n$).  The total error in replacing $E_i$ by $E'_i$ is thus
$$ \sum_{n: N/A \leq 2^n \leq N} O( 2^{-n} 2^{2n} ) = O(N) $$
which is acceptable since $A \geq C_\delta$ for some large $C_\delta$.  It thus suffices to show that
\begin{equation}\label{nan-2}
\begin{split}
&\sum_{n: N/A \leq 2^n \leq N} \sum_{j=0}^{N/2^n - 1}
2^{-n} \left|\sum_{x,t \in \Z} 1_{E'_0}(x) 1_{E'_1}(x+t) \dots 1_{E'_k}(x+kt) \psi(t/2^n) \varphi( 2^{-n} x - j )\right|\\
&\quad \lesssim \delta N \log A
\end{split}
\end{equation}
for some $C_\delta \leq A \ll_\delta 1$.

To control this expression we recall the \emph{Gowers uniformity norms} from \cite{gowers-4,gowers}.  If $f: \Z/N\Z \to \C$ is a function on a cyclic group $\Z/N\Z$ and $d \geq 1$, we define the Gowers uniformity norm $\|f\|_{U^d(\Z/N\Z)}$ by the formula
$$ \|f\|_{U^d(\Z/N\Z)} = \left( \frac{1}{N^{d+1}} \sum_{h_1,\dots,h_d,x \in \Z/N\Z} \Delta_{h_1} \dots \Delta_{h_d} f(x) \right)^{1/2^d} $$
where $\Delta_h f(x) := f(x+h) \overline{f(x)}$.  Given instead a function $f: [N] \to \C$ on the interval $[N]$, we define the Gowers norm $\|f\|_{U^d([N])}$ by the formula
$$ \|f\|_{U^d([N])} =\|f\|_{U^d(\Z/N'\Z)} / \|1_{[N]}\|_{U^d(\Z/N'\Z)} $$
for any $N' \geq 2^d N$, where we embed $[N]$ into $\Z/N'\Z$ and extend $f$ by zero outside of $[N]$; it is easy to see that this definition does not depend on the choice of $N$.  We have the \emph{generalised von Neumann theorem}
\begin{equation}\label{jove}
\frac{1}{N^2} \left| \sum_{x,t \in \Z} f_0(x) f_1(x+t) \dots f_k(x+kt) \right| \lesssim \inf_{0 \leq i \leq k} \|f_i\|_{U^{k}([N])} 
\end{equation}
for any $f_0,\dots,f_k: [N] \to \C$ bounded in magnitude by $1$ (extending by zero outside of $[N]$); see e.g. \cite[Lemma 11.4]{tao-vu} (after applying the embedding of $[N]$ into some suitable $\Z/N'\Z$).  We have the following variant:

\begin{lemma}\label{unf-con}  Let $d := \max(k,2)$.  Then for any $n$ with $2^n \leq N$, any $j \in \Z$ and any functions $f_0,\dots,f_k: [N] \to \C$ bounded in magnitude by $1$ (and extended by zero outside of $[N]$, we have
$$
\frac{1}{N^2} \left|\sum_{x,t \in \Z} f_0(x) f_1(x+t) \dots f_k(x+kt) \psi(t/2^n) \varphi( 2^{-n} x - j )\right| \lesssim 
\inf_{0 \leq i \leq k} \|f_i\|_{U^{d}([N])}.$$
In particular, by the triangle inequality we have
\begin{equation}\label{sod}
\begin{split}
&\sum_{n: N/A \leq 2^n \leq N} \sum_{j=0}^{N/2^n - 1}
2^{-n} \left|\sum_{x,t \in \Z} f_0(x) f_1(x+t) \dots f_k(x+kt) \psi(t/2^n) \varphi( 2^{-n} x - j )\right|\\
&\quad \lesssim_A N \inf_{0 \leq i \leq k} \|f_i\|_{U^{d}([N])}.
\end{split}
\end{equation}
\end{lemma}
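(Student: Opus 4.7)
The plan is to open up the localization weight $\psi(t/2^n)\varphi(2^{-n}x - j)$ by Fourier inversion on $\R$, reducing matters to the generalized von Neumann estimate \eqref{jove}. Since $\psi$ and $\varphi$ are smooth and compactly supported, $\hat\psi$ and $\hat\varphi$ are Schwartz, hence in $L^1(\R)$. Writing $e(u) := e^{2\pi i u}$, Fourier inversion gives
\[
\psi(t/2^n)\varphi(2^{-n}x - j) = \int_\R\!\int_\R \hat\psi(\xi)\,\hat\varphi(\eta)\, e(-\eta j)\, e\!\left(\frac{\xi t + \eta x}{2^n}\right) d\xi\, d\eta,
\]
so the sum of interest becomes an integral weighted by $|\hat\psi(\xi)\hat\varphi(\eta)|$ of the inner sums $S_{\xi,\eta} := \sum_{x,t \in \Z} f_0(x)\cdots f_k(x+kt)\,e((\xi t + \eta x)/2^n)$.

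For each fixed $(\xi,\eta)$ I would absorb the pure-frequency factor using the identity $\xi t + \eta x = (\eta-\xi)x + \xi(x+t)$: set $\tilde f_0(y) := f_0(y)\,e((\eta-\xi)y/2^n)$, $\tilde f_1(y) := f_1(y)\,e(\xi y/2^n)$, and $\tilde f_i := f_i$ for $i \geq 2$; all are still supported in $[N]$ and bounded by $1$, and $S_{\xi,\eta} = \sum_{x,t}\tilde f_0(x)\tilde f_1(x+t)f_2(x+2t)\cdots f_k(x+kt)$. The generalized von Neumann theorem \eqref{jove} applied to this expression yields $|S_{\xi,\eta}|/N^2 \lesssim \inf_{0\leq i\leq k}\|\tilde f_i\|_{U^k([N])}$. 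The next step is to convert these $U^k$ norms of the $\tilde f_i$ back into $U^d([N])$ norms of the $f_i$, where $d := \max(k,2)$. For $i \geq 2$ this is trivial from $\tilde f_i = f_i$ together with the monotonicity $\|\cdot\|_{U^k} \leq \|\cdot\|_{U^d}$; for $i = 0, 1$ it follows from the fact that multiplication by a linear phase $y \mapsto e(\alpha y)$ preserves the Gowers $U^d$ norm for every $d \geq 2$ (in the Gowers inner product the aggregate phase contribution over the Hamming cube $\{0,1\}^d$ equals $e(\alpha\sum_\omega (-1)^{|\omega|}(x+\omega\cdot h))$, and both $\sum_\omega (-1)^{|\omega|}$ and $\sum_\omega (-1)^{|\omega|}\omega_j$ vanish for $d \geq 2$, so the phase trivializes to $1$). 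Combining these ingredients yields $|S_{\xi,\eta}|/N^2 \lesssim \inf_i\|f_i\|_{U^d([N])}$ uniformly in $(\xi,\eta)$; integrating against $|\hat\psi(\xi)\hat\varphi(\eta)|$ and using $\|\hat\psi\|_{L^1}\|\hat\varphi\|_{L^1} = O(1)$ produces the first bound.

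The estimate \eqref{sod} then follows by a lossy summation: there are $O(\log A)$ values of $n$ with $N/A \leq 2^n \leq N$ and $N/2^n$ values of $j$ for each such $n$, and each inner quantity is $O(N^2 \inf_i\|f_i\|_{U^d})$, hence
\[
\sum_{n:\,N/A \leq 2^n \leq N}\sum_{j=0}^{N/2^n - 1} 2^{-n}\cdot N^2 \inf_i\|f_i\|_{U^d} = N^3\inf_i\|f_i\|_{U^d}\!\!\sum_{n:\,N/A \leq 2^n \leq N}\!\! 2^{-2n} \lesssim A^2 N\inf_i\|f_i\|_{U^d},
\]
which is the required $\lesssim_A N\inf_i\|f_i\|_{U^d}$. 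The main subtlety is the phase-invariance step, and it is exactly what forces the lemma to be stated with $d = \max(k,2)$ rather than $d = k$: for $k = 1$ one cannot take $d = 1$ because $\|\cdot\|_{U^1}$ is just the mean value, which is plainly \emph{not} preserved under multiplication by a nontrivial linear phase, so one must invoke monotonicity of Gowers norms to bridge the $U^1$--$U^2$ gap.
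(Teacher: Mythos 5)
Your proof is correct and follows essentially the same route as the paper: Fourier inversion to expand $\psi,\varphi$ into linear phases, absorption of the phases into $f_0$ and $f_1$, the generalised von Neumann bound \eqref{jove}, and invariance of the $U^d$ norm under linear modulation for $d\geq 2$, followed by a crude summation for \eqref{sod}. The only (immaterial) difference is at $k=1$: the paper inserts a dummy function to raise the pattern length to $2$, whereas you invoke Gowers-norm monotonicity $U^1\leq U^2$ after applying \eqref{jove}; both handle the modulation-noninvariance of $U^1$ equally well.
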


\begin{proof} By adding a dummy function $f_{k+1}$ if necessary we may assume that $k \geq 2$, so that $d=k$.  By Fourier inversion we may write
$$ \psi(t) = \int_\R e^{2\pi itu} \hat \psi(u)\ du$$
and
$$ \varphi(x) = \int_\R e^{2\pi i x v} \hat \varphi(v)\ dv$$
for some rapidly decreasing (and in particular, absolutely integrable) functions $\hat \psi, \hat \varphi: \R \to \C$.  By the triangle inequality, it thus suffices to show that
$$
\frac{1}{N^2} \left|\sum_{x,t \in \Z} f_0(x) f_1(x+t) \dots f_k(x+kt) e^{2\pi i tu} e^{2\pi i xv}\right| \lesssim 
\inf_{0 \leq i \leq k} \|f_i\|_{U^{k}([N])}$$
uniformly for all $u,v \in \R$.
Writing
$$ f_0(x) f_1(x+t) e^{2\pi i tu} e^{2\pi i xv} = f_0(x) e^{2\pi i x(v-u)} \times f_1(x+t) e^{2\pi i (x+t) u} $$
and applying \eqref{jove}, we see that
$$
\frac{1}{N^2} \left|\sum_{x,t \in \Z} f_0(x) f_1(x+t) \dots f_k(x+kt) e^{2\pi i tu} e^{2\pi i xv}\right| \lesssim 
\inf_{0 \leq i \leq k} \|\tilde f_i \|_{U^{k}([N])}$$
where $\tilde f_i$ is $f_i$ modulated by a Fourier character $x \mapsto e^{2\pi i x v}$ for some $v \in \R$.  But if $k \geq 2$, one easily verifies that $\tilde f_i$ has the same $U^k$ norm as $f_i$ (because $\Delta_{h_1} \Delta_{h_2} \tilde f_i = \Delta_{h_1} \Delta_{h_2} f_i$ for any $h_1,h_2$), and the claim follows.
\end{proof}

We also need a similar statement in which the $U^d$ norm is replaced by the $L^2$ norm:

\begin{lemma}\label{l2-con}  For any $f_0,\dots,f_k:[N] \to \C$ bounded in magnitude by $1$, we have
\begin{align*}
&\sum_{n: N/A \leq 2^n \leq N} \sum_{j=0}^{N/2^n - 1}
2^{-n} \left|\sum_{x,t \in \Z} f_0(x) f_1(x+t) \dots f_k(x+kt) \psi(t/2^n) \varphi( 2^{-n} x - j )\right|\\
&\quad \lesssim N^{1/2} (\log A) \inf_{0 \leq i \leq k} \|f_i\|_{\ell^2([N])}.
\end{align*}
\end{lemma}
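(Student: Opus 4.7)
The plan is to establish this by a direct Cauchy--Schwarz estimate at each dyadic scale; no oscillation or Gowers-norm cancellation is exploited, which is why the right-hand side is much worse than in Lemma \ref{unf-con}.

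Fix $n$ with $N/A \leq 2^n \leq N$, fix $j \in \{0,\dots,N/2^n - 1\}$, and fix any $0 \leq i \leq k$ that will eventually be chosen to minimise $\|f_i\|_{\ell^2([N])}$. First I would perform the substitution $u := x + it$ (which is a bijection on $\Z^2$ with $t$ held fixed) to put $f_i$ into a factor depending only on $u$:
$$
\sum_{x,t \in \Z} f_0(x) f_1(x+t) \cdots f_k(x+kt) \psi(t/2^n) \varphi(2^{-n}x - j) = \sum_{u \in \Z} f_i(u) K_{n,j,i}(u),
$$
where
$$
K_{n,j,i}(u) := \sum_{t \in \Z} \Big( \prod_{l \neq i} f_l(u + (l-i)t) \Big) \psi(t/2^n) \varphi\big(2^{-n}(u-it) - j\big).
$$
The supports of $\psi(\cdot/2^n)$ and $\varphi$ force $|t| \sim 2^n$ and $u - it = (j + O(1))2^n$, so $K_{n,j,i}$ is supported in an interval $\tilde I_{n,j,i}$ of length $O(2^n)$ (its centre is shifted from $j \cdot 2^n$ by an amount of order $i \cdot 2^n$). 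Since $|f_l|, |\psi|, |\varphi| \lesssim 1$, the $t$-sum has $O(2^n)$ terms of size $O(1)$, giving $\|K_{n,j,i}\|_\infty \lesssim 2^n$ and hence $\|K_{n,j,i}\|_{\ell^2(\Z)} \lesssim 2^{3n/2}$. Cauchy--Schwarz in $u$ then yields
$$
2^{-n}\Big| \sum_{u \in \Z} f_i(u) K_{n,j,i}(u) \Big| \lesssim 2^{n/2} \|f_i\|_{\ell^2(\tilde I_{n,j,i})}.
$$

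Next I would sum over $j$. For fixed $i$ and $n$, the intervals $\tilde I_{n,j,i}$ have centres spaced $2^n$ apart and length $O(2^n)$, so the family $\{\tilde I_{n,j,i}\}_j$ has bounded overlap. Cauchy--Schwarz in $j$ combined with this bounded overlap and the fact that $f_i$ is supported in $[N]$ give
$$
\sum_{j=0}^{N/2^n - 1} \|f_i\|_{\ell^2(\tilde I_{n,j,i})} \lesssim (N/2^n)^{1/2} \Big( \sum_j \|f_i\|_{\ell^2(\tilde I_{n,j,i})}^2 \Big)^{1/2} \lesssim (N/2^n)^{1/2} \|f_i\|_{\ell^2([N])}.
$$
Thus at each scale $n$ the double sum contributes $\lesssim N^{1/2} \|f_i\|_{\ell^2([N])}$, and summing over the $O(\log A)$ dyadic scales $n$ with $N/A \leq 2^n \leq N$ and taking the infimum over $i$ produces the desired bound.

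The argument is entirely elementary and I expect no real obstacle. The only point requiring some attention is that when $i \neq 0$ the change of variables $u = x + it$ shifts the location of the $\varphi$-cutoff (since $\varphi(2^{-n}(u-it) - j)$ now depends on $t$), but this only moves the centre of the localising interval by $O(2^n)$ without affecting its length or the bounded-overlap property of the cover as $j$ varies.
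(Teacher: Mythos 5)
Your argument is correct: the change of variables $u = x+it$, the support/size bounds on the kernel $K_{n,j,i}$, the local Cauchy--Schwarz in $u$, and the bounded-overlap plus Cauchy--Schwarz in $j$ all go through, and the loss of a factor per scale times the $O(\log A)$ scales gives exactly the claimed bound (the only cosmetic imprecision is that for $i \neq 0$ the support of $K_{n,j,i}$ is not merely shifted but widened to an interval of length $O_k(2^n)$ around $j2^n$, which does not affect the bounded-overlap property). The paper's proof is the same elementary, cancellation-free estimate but with lighter bookkeeping: one observes directly that for each $y \in \Z$ and each scale $n$ only $O(1)$ values of $j$ produce terms involving $f_i(y)$, and each such $j$ contributes $O(2^n)$ terms of size $O(1)$, so after the triangle inequality the whole left-hand side is $\lesssim (\log A)\sum_y |f_i(y)| = (\log A)\|f_i\|_{\ell^1([N])}$, and a single application of Cauchy--Schwarz (using that $f_i$ is supported on $[N]$) converts this to $N^{1/2}(\log A)\|f_i\|_{\ell^2([N])}$. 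Thus the paper passes through the slightly stronger intermediate $\ell^1$ bound and uses Cauchy--Schwarz only once at the end, whereas your version applies it twice (in $u$ and in $j$) with a bounded-overlap argument in between; the two routes are of essentially equal strength for the purposes of the lemma, and yours is a perfectly acceptable substitute.
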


\begin{proof}  Let $0 \leq i \leq k$.  Observe that for each integer $y$ and each $n$ with $N/A \leq 2^n \leq N$, there are at most $O(1)$ choices of $j$ for which the sum $\sum_{x,t \in \Z} f_0(x) f_1(x+t) \dots f_k(x+kt) \psi(t/2^n) \varphi( 2^{-n} x - j )$ contains a non-zero term involving $f_i(y)$, and when this is the case there are $O(2^n)$ such terms.  From this and the triangle inequality we see that
\begin{align*}
&\sum_{n: N/A \leq 2^n \leq N} \sum_{j=0}^{N/2^n - 1}
2^{-n} \left|\sum_{x,t \in \Z} f_0(x) f_1(x+t) \dots f_k(x+kt) \psi(t/2^n) \varphi( 2^{-n} x - j )\right|\\
&\quad \lesssim (\log A) \sum_{y \in \Z} |f_i(y)|
\end{align*}
and the claim now follows from the Cauchy-Schwarz inequality.
\end{proof}

The above lemmas show, roughly speaking, that we may freely modify each of the $1_{E'_i}$ by errors that are either small in (normalised) $\ell^2([N])$ norm, or \emph{extremely} small in $U^d([N])$ norm.  To exploit this phenomenon, we will need the \emph{arithmetic regularity lemma} from \cite{gt-reg} that asserts that all bounded functions on $[N]$, up to errors of the above form, can be expressed as a very well distributed nilsequence.  To make this statement precise, we need to recall a large number of definitions from \cite{gt-reg} (although for the purposes of this paper, many of the concepts defined here can be taken as ``black boxes'').

\begin{definition}[Filtered nilmanifold] Let $s \geq 1$ be an integer.  A \emph{filtered nilmanifold} $G/\Gamma = (G/\Gamma, G_\bullet)$ of degree $\leq s$ consists of the following data:
\begin{enumerate}
\item A connected, simply-connected nilpotent Lie group $G$;
\item A discrete, cocompact subgroup $\Gamma$ of $G$ (thus the quotient space $G/\Gamma$ is a compact manifold, known as a \emph{nilmanifold});
\item A \emph{filtration} $G_\bullet = (G_{(i)})_{i=0}^\infty$ of closed connected subgroups
$$ G = G_{(0)} = G_{(1)} \geq G_{(2)} \geq \ldots$$
of $G$, which are \emph{rational} in the sense that the subgroups $\Gamma_{(i)} := \Gamma \cap G_{(i)}$ are cocompact in $G_{(i)}$, such that $[G_{(i)},G_{(j)}] \subseteq G_{(i+j)}$ for all $i,j \geq 0$, and such that $G_{(i)}=\{\id\}$ whenever $i>s$;
\item A \emph{Mal'cev basis} $\mathcal{X} = (X_1,\ldots,X_{\dim(G)})$ adapted to $G_{\bullet}$, that is to say a basis $X_1,\ldots,X_{\dim(G)}$ of the Lie algebra of $G$ that exponentiates to elements of $\Gamma$, such that $X_j,\ldots,X_{\dim(G)}$ span a Lie algebra ideal for all $j \leq i \leq \dim(G)$, and $X_{\dim(G)-\dim(G_{(i)})+1},\ldots,X_{\dim(G)}$ spans the Lie algebra of $G_{(i)}$ for all $1 \leq i \leq s$. 
\end{enumerate}
\end{definition}

One may use a Mal'cev basis to define a metric $d_{G/\Gamma}$ on the nilmanifold $G/\Gamma$, as per \cite[Definition 2.2]{green-tao-nilratner}. 

\begin{definition}[Complexity] Let $M \geq 1$.  We say that a filtered nilmanifold $G/\Gamma = (G/\Gamma,G_\bullet)$ has \emph{complexity $\leq M$} if the dimension of $G$, the degree of $G_\bullet$, and the rationality of the Mal'cev basis $\mathcal{X}$ (cf. \cite[Definition 2.4]{green-tao-nilratner}) are bounded by $M$.
\end{definition}

\begin{definition}[Polynomial sequence]  Let $(G/\Gamma,G_\bullet)$ be a filtered nilmanifold, with filtration $G_\bullet = (G_{(i)})_{i=0}^\infty$.  A \emph{polynomial sequence} adapted to this filtered nilmanifold is a sequence $g: \Z \to G$ with the property that 
$$ \partial_{h_1} \ldots \partial_{h_i} g(n) \in G_{(i)}$$
for all $i \geq 0$ and $h_1,\ldots,h_i,n \in \Z$, where $\partial_h g(n) := g(n+h) g(n)^{-1}$ is the derivative of $g$ with respect to the shift $h$. 
\end{definition}

\begin{definition}[Orbits]  Let $s \geq 1$ be an integer, and let $M, A > 0$ be parameters.  A  \emph{polynomial orbit} of degree $\leq s$ and complexity $\leq M$ is any function $n \mapsto g(n) \Gamma$ from $\Z \to G/\Gamma$, where $(G/\Gamma,G_\bullet)$ is a filtered nilmanifold of complexity $\leq M$, and $g: \Z \to G$ is a polynomial sequence.  
\end{definition}

\begin{definition}[Nilsequences]
A \emph{\textup{(}polynomial\textup{)} nilsequence} of degree $\leq s$ and complexity $\leq M$ is any function $f: \Z \to \C$ of the form $f(n) = F(g(n)\Gamma)$, where $n \mapsto g(n)\Gamma$ is a polynomial orbit of degree $\leq s$ and complexity $\leq M$, and $F: G/\Gamma \to \C$ is a function of Lipschitz norm\footnote{The (inhomogeneous) Lipschitz norm $\|F\|_{\operatorname{Lip}}$ of a function $F: X \to \C$ on a metric space $X = (X,d)$ is defined as
$$\|F\|_{\operatorname{Lip}} := \sup_{x \in X} |F(x)| + \sup_{x,y \in X: x \neq y} \frac{|F(x)-F(y)|}{|x-y|}.$$
} at most $M$.
\end{definition}

\begin{definition}[Virtual nilsequences]\label{virt-def}
Let $N \geq 1$.  A \emph{virtual nilsequence} of degree $\leq s$ and complexity $\leq M$ at scale $N$ is any function $f: [N] \to \C$ of the form $f(n) = F(g(n)\Gamma, n \md{q}, n/N)$, where $1 \leq q \leq M$ is an integer, $n \mapsto g(n)\Gamma$ is a polynomial orbit of degree $\leq s$ and complexity $\leq M$, and $F: G/\Gamma \times \Z/q\Z \times \R \to \C$ is a function of Lipschitz norm at most $M$.  (Here we place a metric on $G/\Gamma \times \Z/q\Z \times \R$ in some arbitrary fashion, e.g. by embedding $\Z/q\Z$ in $\R/\Z$ and taking the direct sum of the metrics on the three factors.)  We define a \emph{vector valued virtual nilsequence} $f: [N] \to \C^d$ similarly, except that $F$ now takes values in $\C^d$ instead of $\C$.
\end{definition}

We now have almost all the definitions needed to state the arithmetic regularity lemma:

\begin{theorem}[Arithmetic regularity lemma]\label{strong-reg}
Let $f: [N] \to [0,1]^d$ be a function for some $d \geq 0$, let $s \geq 1$ be an integer, let $\eps > 0$, and let $\Grow: \R^+ \to \R^+$ be a monotone increasing function with $\Grow(M) \geq M$ for all $M$.  Then there exists a quantity $M = O_{s,\eps,\Grow,d}(1)$ and a decomposition 
$$ f = f_{\nil} + f_{\sml} + f_{\unf}$$
of $f$ into functions $f_{\nil}, f_{\sml}, f_{\unf}: [N] \to [-1,1]^d$ of the following form:
\begin{enumerate}
\item \textup{(}$f_{\nil,i}$ structured\textup{)} $f_{\nil}$ is a $(\Grow(M),N)$-irrational vector-valued virtual nilsequence of degree $\leq s$, complexity $\leq M$, and scale $N$, where the notion of $(A,N)$-irrationality is defined in \cite[Definition A.6]{gt-reg};
\item \textup{(}$f_\sml$ small\textup{)} Each of the $d$ components of $f_{\sml}$ has an $\ell^2([N])$ norm of at most $\eps N^{1/2}$;
\item \textup{(}$f_\unf$ very uniform\textup{)} Each of the $d$ components of $f_{\unf}$ has a $U^{s+1}([N])$ norm of at most $1/\Grow(M)$;
\item \textup{(}Nonnegativity\textup{)} $f_{\nil}$ and $f_{\nil} + f_{\sml}$ take values in $[0,1]^d$.
\end{enumerate}
\end{theorem}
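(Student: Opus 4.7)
The plan is to follow the proof strategy of \cite{gt-reg}, which I sketch here. First I reduce to the scalar case $d=1$: applying the scalar version to each component (with the same $\eps$ and a suitably enlarged growth function $\Grow$) produces $d$ structured pieces on possibly distinct nilmanifolds $G_i/\Gamma_i$ of bounded complexity, and these can be assembled on the product nilmanifold $\prod_i G_i/\Gamma_i$ into a single vector valued virtual nilsequence, with the small-part $\ell^2$ bounds being componentwise and the uniform-part $U^{s+1}$ bounds staying small by the triangle inequality.

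The core of the scalar argument is an \emph{energy increment} driven by the Gowers inverse theorem. Start with the trivial decomposition $f = 0 + 0 + f$ and iterate: at stage $k$ one has $f = f^{(k)}_{\nil} + f^{(k)}_{\sml} + f^{(k)}_{\unf}$ with $f^{(k)}_{\nil}$ a polynomial nilsequence of complexity $\leq M_k$ and $\|f^{(k)}_{\sml}\|_{\ell^2([N])} \leq \eps N^{1/2}$. If $\|f^{(k)}_{\unf}\|_{U^{s+1}([N])} > 1/\Grow(M_k)$, invoke the inverse conjecture for the Gowers norms to produce a nilsequence $\chi_k$ of bounded complexity correlating with $f^{(k)}_{\unf}$, and absorb $\chi_k$ into the structured part by passing to the product nilmanifold of the old structure with the nilmanifold underlying $\chi_k$. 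A standard $L^2$ orthogonality computation shows that the energy $\frac{1}{N}\|f^{(k)}_{\nil}\|_{\ell^2([N])}^2$ increases by a quantitative amount at each step; since the total energy is bounded by $1$, the procedure terminates in $O_{s,\eps,\Grow}(1)$ iterations.

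Once the iteration halts, the structured piece is a nilsequence but need not be irrational. Apply the factorization theorem of \cite{green-tao-nilratner} to write the underlying polynomial sequence as $g(n) = \epsilon(n) g'(n) \gamma(n)$, where $\gamma(n)$ is rational (hence periodic with period $q$ of bounded size), $\epsilon(n)$ is slowly varying at scale $N$, and $g'(n)$ takes values in a subnilmanifold on which it is $\Grow(M)$-irrational. Repackaging as in Definition \ref{virt-def}, the slowly varying factor is absorbed into the $n/N$ coordinate and the rational factor into the $n \md q$ coordinate, producing a virtual nilsequence of the required form. Finally, truncate $f_{\nil}$ and $f_{\nil}+f_{\sml}$ to $[0,1]$ to enforce the nonnegativity condition; the truncation error is pointwise controlled by $|f_{\sml}+f_{\unf}|$ on a set where this sum is bounded away from zero, and can be absorbed into $f_{\sml}$ after a mild rescaling of $\eps$.

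The main obstacle is the factorization/irrationality step: the inverse theorem alone only yields a nilsequence, and extracting quantitative equidistribution of the strength demanded by the rapidly growing parameter $\Grow(M)$ requires the full Ratner-type equidistribution theory. The delicate bookkeeping is to close the loop so that the final complexity $M$ is compatible with $\Grow(M)$-irrationality of $g'$, which forces the output $M$ to depend on $\Grow$ itself (and not just on $\eps$, $s$, $d$).
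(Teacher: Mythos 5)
The paper does not reprove this statement: it is quoted as a black box from \cite[Theorem 1.2]{gt-reg}, with the remark that the vector-valued case $d\geq 1$ follows by running \emph{the same argument} (not the scalar statement) for all components simultaneously. Your scalar outline (energy increment driven by the inverse theorem for the $U^{s+1}$ norm, then the factorization theorem of \cite{green-tao-nilratner} to upgrade the nilsequence to an irrational virtual nilsequence) is indeed the shape of the argument in \cite{gt-reg}. However, two of your steps, as written, do not work.

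First, the reduction of the vector case to the scalar case by applying the scalar lemma componentwise and assembling on the product nilmanifold fails at exactly the property that matters: irrationality. Irrationality is not preserved under taking products of orbits. For instance, if $d=2$ and $f_2=f_1$, the componentwise decompositions may produce the identical orbit $g_1$ twice, and the diagonal sequence $(g_1(n),g_1(n))$ in $G_1\times G_1$ is annihilated by the nontrivial horizontal character $(x,y)\mapsto \eta(x)-\eta(y)$, so it is not even $1$-irrational, let alone $(\Grow(M),N)$-irrational for the complexity $M$ of the product. This is not a cosmetic point: the application in this paper evaluates $\prod_{i=0}^k F_i(g(x+it)\Gamma,\dots)$ with a \emph{single common} irrational orbit $g$ shared by all components, which is what licenses the counting lemma \cite[Theorem 1.11]{gt-reg}. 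To repair this you would have to apply the factorization theorem again to the product orbit and re-absorb the smooth and rational parts, i.e.\ rerun the irrationality step jointly in all components --- which is precisely the ``same argument extends'' route the paper intends, not a formal componentwise reduction.

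Second, the nonnegativity property (4) cannot be obtained by truncating $f_{\nil}$ and $f_{\nil}+f_{\sml}$ to $[0,1]$ at the end. The pointwise bound you invoke controls the truncation error by $|f_{\sml}+f_{\unf}|$, but $f_{\unf}$ is only small in $U^{s+1}([N])$; its $\ell^2([N])$ norm can be comparable to $N^{1/2}$ (e.g.\ a quasirandom $\pm 1$-valued function), so the truncation error need not be absorbable into $f_{\sml}$ after any rescaling of $\eps$. Relatedly, your energy-increment structured part, built by accumulating correlating nilsequences, is not guaranteed to take values in $[-1,1]$, let alone $[0,1]$. In \cite{gt-reg} property (4) comes for free because $f_{\nil}+f_{\sml}$ is a conditional expectation of $f$ onto a factor generated by bounded-complexity nilsequences, and $f_{\nil}$ is an averaged (Lipschitz) approximation of it, both of which stay in $[0,1]$; your sketch needs this conditional-expectation structure rather than a posteriori truncation.
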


\begin{proof}  See \cite[Theorem 1.2]{gt-reg}.  Strictly speaking, this theorem is only stated in the scalar case $d=1$, but the same argument extends without difficulty to the vector-valued case $d \geq 1$ (note we allow our bound on $M$ to depend on $d$).  We remark that the bounds on $M$ in the above theorem are extremely poor (at least tower-exponential or worse, in practice) for a variety of reasons, including the lack of good (or indeed any) quantitative bounds for the inverse theorem for higher order Gowers uniformity norms.
\end{proof}

We apply this lemma with $\eps$ replaced by $\delta^2$, $s+1$ replaced by $d$, $d$ replaced by $k+1$, and $\Grow$ a rapidly increasing function to be chosen later, to obtain decompositions
$$ 1_{E'_i} = f_{\nil,i} + f_{\sml,i} + f_{\unf,i} $$
with $f_{\nil,i}, f_{\sml,i}, f_{\unf,i}: [N] \to [-1,1]$ being the components of functions $f_{\nil}, f_{\sml}, f_{\unf}: [N] \to [-1,1]^{k+1}$ obeying the conclusions of Theorem \ref{strong-reg}.  
By the triangle inequality, the left-hand side of \eqref{nan-2} can be written as the sum of $3^{k+1} = O(1)$ terms, in which each of the $1_{E'_i}$ has been replaced by one of $f_{\nil,i}, f_{\sml,i}, f_{\unf,i}$.  By Lemma \ref{unf-con}, the contribution of any term involving one of the $f_{\unf,i}$ is at most $O_A( N / \Grow(M) )$, while from Lemma \ref{l2-con} the contribution of any term involving one of the $f_{\sml,i}$ is $O(\delta N \log A)$.  Thus, one may bound the left-hand side of \eqref{nan-2} by
$$
\left( \sum_{n: N/A \leq 2^n \leq N} X_n \right) + O_A( N / \Grow(M) ) + O(\delta N \log A)$$
where $X_n$ is the quantity 
$$ X_n := \sum_{j=0}^{N/2^n - 1} 2^{-n} \left|\sum_{x,t \in \Z} f_{\nil,0}(x) f_{\nil,1}(x+t) \dots f_{\nil,k}(x+kt) \psi(t/2^n) \varphi( 2^{-n} x - j )\right|.$$
We now turn to the estimation of $X_n$.  Bounding each $f_{\nil,i}$ by $O(1)$, we have the trivial bound
\begin{equation}\label{xan}
 X_n \lesssim N 
\end{equation}
which we will use for values of $2^n$ that are close to $N$.  For the remaining values of $n$, we argue as follows.  By Definition \ref{virt-def}, we have
$$f_{\nil,i}(x+it) = F_i\left(g(x+it)\Gamma, x+it \md{q}, \frac{x+it}{N}\right)$$
whenever $0\leq i \leq k$ and $x+it \in [N]$, for some positive integer $q = O_M(1)$, some $(\Grow(M),N)$-irrational polynomial orbit $n \mapsto g(n)\Gamma$ of degree $\leq d-1$ and complexity $O_M(1)$ into a filtered nilmanifold $G/\Gamma$ of degree $\leq d-1$ and complexity $O_M(1)$, and a function $F_i: G/\Gamma \times \Z/q\Z \times \R \to \C$ of Lipschitz norm at most $O_M(1)$. 

For all but $O(1)$ values of $0 \leq j \leq N/2^n-1$, one has $x,x+t,\dots,x+kt \in [N]$ in the support of $\psi(t/2^n) \varphi( 2^{-n} x - j )$.  The exceptional values of $j$ contribute $O(2^n)$ to $X_n$; thus
$$ X_n \ll \sum_{j=0}^{N/2^n - 1} 2^{-n} \left|\sum_{x,t \in \Z} \prod_{i=0}^k F_i\left(g(x+it)\Gamma, x+it \md{q}, \frac{x+it}{N}\right) \psi(t/2^n) \varphi( 2^{-n} x - j )\right| + 2^n.$$
By the triangle inequality, we thus have
$$ X_n \ll 2^{-2n} N \left|\sum_{x,t \in \Z} \prod_{i=0}^k F_i(g(x+it)\Gamma, x+it \md{q}, \frac{x+it}{N}) \psi(t/2^n) \varphi( 2^{-n} x - j )\right| + 2^n$$
for some $0 \leq j < N/2^n$.
Splitting $x,t$ into residue classes modulo $q = O_M(1)$ and using the triangle inequality, we thus have
$$ X_n \ll_M 2^{-2n} N  \left|\sum_{x,t \in \Z; x = a \md{q}, t = b \md{q}} \prod_{i=0}^k F_i(g(x+it)\Gamma, a+ib \md{q}, \frac{x+it}{N}) \psi(t/2^n) \varphi( 2^{-n} x - j )\right| + 2^n$$
for some $0 \leq a,b < q$.  Next, on the support of $\psi(t/2^n) \varphi( 2^{-n} x - j )$ we have $\frac{x+it}{N} = \frac{j 2^n}{N} + O( \frac{2^n}{N} )$, hence by the Lipschitz property
$$ F_i(g(x+it)\Gamma, a+ib \md{q}, \frac{x+it}{N}) = F'_{i}(g(x+it)\Gamma) + O_M\left( \frac{2^n}{N} \right) $$
where $F'_{i} = F'_{i,j,a,b,n,N}: G/\Gamma \to \R$ is the function
$$ F'_{i}(x) := F_i\left(x, a+ib \md{q}, \frac{j 2^n}{N} \right).$$
Note that $F'_{i}$ has a Lipschitz norm of $O_M(1)$.
We conclude that
$$ X_n \ll_M 2^{-2n} N \left|\sum_{x,t \in \Z; x = a \md{q}, t = b \md{q}} \prod_{i=0}^k F_{i,j}(g(x+it)\Gamma) \psi(t/2^n) \varphi( 2^{-n} x - j )\right| + 2^n.$$
Making the substitution $x = qx' + a$, $t = qt' + b$, this becomes
$$ X_n \ll_M 2^{-2n} N \left|\sum_{x',t' \in \Z} \prod_{i=0}^k F_{i,j}(g(qx'+iqt' + a+ib)\Gamma) \psi(\frac{qt'+a}{2^n}) \varphi( 2^{-n} q x' + 2^{-n} a - j )\right| + 2^n.$$
At this point, we use the counting lemma from \cite[Theorem 1.11]{gt-reg}.  This gives the asymptotic
$$
\sum_{x' \in I, t' \in J} \prod_{i=0}^k F_{i,j}(g(qx'+iqt' + a+ib)\Gamma) = 
|I| |J| \alpha + o_{\Grow(M) \to \infty;M}(N^2) + o_{N \to \infty; M}(N^2) $$
for any intervals $I, J \subset [-N,N]$, where $\alpha$ is a quantity independent of $I,J$ (it is given by an explicit integral of a certain Lipschitz function on a certain filtered nilmanifold, but its precise value is immaterial for the current argument).  Since the left-hand side of this asymptotic is $O_M( |I| |J| )$, we may assume without loss of generality that $\alpha = O_M(1)$.  A routine Riemann sum argument using the bound $2^n \geq N/A$ and the smooth nature of $\psi,\varphi$ (decomposing the $x',t'$ variables into intervals of length $[N/A^{100}]$) then shows that
\begin{align*}
&\sum_{x',t' \in \Z} \prod_{i=0}^k F_{i,j}(g(qx'+iqt' + a+ib)\Gamma) \psi(\frac{qt'+a}{2^n}) \varphi( 2^{-n} q x' + 2^{-n} a - j ) \\
&\quad = \alpha \int_{\R^2} \psi(\frac{qt'+a}{2^n}) \varphi( 2^{-n} q x' + 2^{-n} a - j )\ dx' dt' + O( A^{-90} N^2 ) \\
&\quad\quad +  o_{\Grow(M) \to \infty;M,A}(N^2) + o_{N \to \infty; M,A}(N^2).
\end{align*}
Since $\psi$ is odd, the integral here vanishes.  We conclude (since $2^{-2n} N \ll A^2 N^{-1}$) that
$$ X_n \ll_M o_{\Grow(M) \to \infty;M,A}(N) + o_{N \to \infty; M,A}(N) + A^{-80} N + 2^n.$$
Using this bound for $2^n \leq A^{-\delta} N$, and the trivial bound \eqref{xan} for $A^{-\delta} N < n \leq N$, we have
$$
\sum_{n: N/A \leq 2^n \leq N} X_n \ll o_{\Grow(M) \to \infty;M,A}(N) + o_{N \to \infty; M,A}(N) + O_M( A^{-\delta} N ) + \delta N \log A
$$
and so we may bound the left-hand side of \eqref{nan-2} by
$$
o_{\Grow(M) \to \infty;M,A}(N) + o_{N \to \infty; M,A}(N) + O_M( A^{-\delta} N ) + O( \delta N \log A ).
$$
If we choose $A$ sufficiently large depending on $\delta, M$, and then $\Grow$ sufficiently rapidly growing, and then $N$ sufficiently large depending on $\delta, \Grow$ (recalling that $M = O_{\delta, \Grow}(1)$), we can make this expression $O( \delta N \log A )$, giving \eqref{nan-2} as required.

\begin{remark}  The above arguments also provide some non-trivial cancellation for (truncations of) other variants of the multilinear Hilbert transform.  For instance, one replace $H_k$ by the maximal truncated $k$-linear Hilbert transform
$$ \sup_{r < R} |H_{k,r,R} f(x)|;$$
one could also consider the multilinear Hilbert transform
\begin{equation}\label{cck}
\operatorname{p.v.} \int_\R f_1(x+c_1 t) \dots f_k(x+c_k t)\ \frac{dt}{t}
\end{equation}
with rational coefficients $c_1,\dots,c_k$; we leave the modification of the above arguments to these operators to the interested reader.  Curiously, there appears to be some difficulty extending the arguments to the final variant \eqref{cck} if the $c_1,\dots,c_k$ are not rationally commensurate, as one cannot easily discretise in this case to deploy additive combinatorics tools.  A somewhat similar phenomenon appeared previously in \cite{christ}.  It may be possible to get around this difficulty by developing a continuous version of the arithmetic regularity and counting lemmas.  It is also possible, in principle at least, to obtain a corresponding result for the maximal multilinear Hilbert transform
or by a polynomial Carleson type operator
$$
\sup_P \operatorname{p.v.} \int_{\R} f_1(x+t) \dots f_k(x+kt) e^{2\pi i P(t)}\ \frac{dt}{t}
$$
where $P$ ranges over all polynomials $P: \R \to \R$ of degree bounded by some fixed $d$, although this may require a more sophisticated counting lemma than the one given in \cite{gt-reg}.
\end{remark}

\end{document}